\newcommand{\klockan}{\the\hours:{\ifnum\minutes<10 0\fi}\the\minutes}
\newcommand{\tid}{\today\ \klockan}
\newcommand{\prtid}{\smash{\raise 10mm \hbox{\LaTeX ed \tid}}}
\renewcommand{\prtid}{}
\makeatletter \pagestyle{headings} \headheight 10pt
\def\sectionmark#1{} %\markboth{{\sectnr #1}}{{\sectnr #1}}} %Journal
\def\subsectionmark#1{}
\newcommand{\sectnr}{\ifnum \c@secnumdepth >\z@
                 \thesection.\hskip 1em\relax \fi}
\def\@evenhead{\footnotesize\rm\thepage\hfil\leftmark\hfil\llap{\prtid}}
\def\@oddhead{\footnotesize\rm\rlap{\prtid}\hfil\rightmark\hfil\thepage}
\def\tableofcontents{\section*{Contents} %\@mkboth{Contents}{Contents}} %Journal
 \@starttoc{toc}}
\def\@biblabel#1{#1.}
\let\Thebibliography=\thebibliography
\renewcommand{\thebibliography}[1]{\def\@mkboth##1##2{}\Thebibliography{#1}
\addcontentsline{toc}{section}{References}
\frenchspacing % Maybe not needed
% Deleting extra vertical space
\setlength{\@topsep}{0pt}% Delete if extra space before list
\setlength{\itemsep}{0pt}%
\setlength{\parskip}{0pt plus 2pt}%
}
\def\mdots@{\mathinner.\nonscript\!.%
 \ifx\next,.\else\ifx\next;.\else\ifx\next..\else
 \nonscript\!\mathinner.\fi\fi\fi}
\let\ldots\mdots@
\let\cdots\mdots@
\let\dotso\mdots@
\let\dotsb\mdots@
\let\dotsm\mdots@
\let\dotsc\mdots@
\def\vdots{\vbox{\baselineskip2.8\p@ \lineskiplimit\z@
    \kern6\p@\hbox{.}\hbox{.}\hbox{.}\kern3\p@}}
\def\ddots{\mathinner{\mkern1mu\raise8.6\p@\vbox{\kern7\p@\hbox{.}}%
    \raise5.8\p@\hbox{.}\raise3\p@\hbox{.}\mkern1mu}}
\let\Enumerate=\enumerate
\renewcommand{\enumerate}{\Enumerate%
% Deleting extra vertical space
\setlength{\@topsep}{0pt}% Delete if extra space before list
\setlength{\itemsep}{0pt}%
\setlength{\parskip}{0pt plus 1pt}%
\renewcommand{\theenumi}{\textup{(\alph{enumi})}}%
\renewcommand{\labelenumi}{\theenumi}%
}
\let\endEnumerate=\endenumerate
\renewcommand{\endenumerate}{\endEnumerate\unskip}
\def\@seccntformat#1{\csname the#1\endcsname.\quad}
\newcommand{\authortitle}[2]{\author{#1}\title{#2}\markboth{#1}{#2}}
\newcommand{\art}[6]{{\sc #1, \rm #2, \it #3 \bf #4 \rm (#5), \mbox{#6}.}}
\newcommand{\auth}[2]{{#1, #2.}}
\newcommand{\arttoappear}[3]{{\sc #1, \rm #2, to appear in \it #3}}
\newcommand{\book}[3]{{\sc #1, \it #2, \rm #3.}}
\newcommand{\AND}{{\rm and }}
\newtheoremstyle{descriptive}%
  {\topsep}   %{\medskipamount}          % Space above
  {\topsep}   %  {\medskipamount}          % Space below
  {\rmfamily} % Body font
  {}          % Indent
  {\bfseries} % Head font
  {.}         % Punctuation after thm head
  { }         % Space after thm head
  {}          % Thm head spec(?)
\newtheoremstyle{propositional}%
  {\topsep}   %  {\medskipamount}          % Space above
  {\topsep}   %  {\medskipamount}          % Space below
\theoremstyle{propositional}
\newtheorem{thm}{Theorem}[section]
\newtheorem{prop}[thm]{Proposition}
\newtheorem{lem}[thm]{Lemma}
\newtheorem{cor}[thm]{Corollary}
\theoremstyle{descriptive}
\newtheorem{deff}[thm]{Definition}
\newtheorem{remark}[thm]{Remark}
\newtheorem{openprob}[thm]{Open problem}
\renewenvironment{proof}[1][\proofname]{\par
  \pushQED{\qed}%
  \normalfont
%\topsep6\p@\@plus6\p@\relax % Removed by Anders Bj\"orn
  \trivlist
  \item[\hskip\labelsep
        \itshape
    #1\@addpunct{.}]\ignorespaces
}{%
  \popQED\endtrivlist\@endpefalse
} \makeatother
\newcommand{\setm}{\setminus}
\renewcommand{\emptyset}{\varnothing}
\newcommand{\Cp}{{C_p}}
\newcommand{\CpU}{{C_p^U}}
\newcommand{\CpE}{{C_p^E}}
\newcommand{\CpX}{{C_p^X}}
\DeclareMathOperator{\capp}{cap}
\newcommand{\cp}{\capp_p}
\DeclareMathOperator{\diam}{diam} 
\DeclareMathOperator{\Lip}{Lip}
\DeclareMathOperator{\fineint}{fine-int}
\newcommand{\psubset}{\mathrel{\overset{\scriptscriptstyle p}{\subset}}}
\newcommand{\pSubset}{\mathrel{\overset{\scriptscriptstyle p}{\Subset}}}
\newcommand{\bdry}{\partial}
\newcommand{\bdy}{\bdry}
\newcommand{\loc}{_{\rm loc}}
{\catcode`p =12 \catcode`t =12 \gdef\eeaa#1pt{#1}}      % Get slantfactor
\def\accentadjtext#1{\setbox0\hbox{$#1$}\kern   % Convert it with height
                \expandafter\eeaa\the\fontdimen1\textfont1 \ht0 }
\def\accentadjscript#1{\setbox0\hbox{$#1$}\kern % Convert it with height
                \expandafter\eeaa\the\fontdimen1\scriptfont1 \ht0 }
\def\accentadjscriptscript#1{\setbox0\hbox{$#1$}\kern   % Convert it with height
                \expandafter\eeaa\the\fontdimen1\scriptscriptfont1 \ht0 }
\def\accentadjtextback#1{\setbox0\hbox{$#1$}\kern       % Convert it with height
                -\expandafter\eeaa\the\fontdimen1\textfont1 \ht0 }
\def\accentadjscriptback#1{\setbox0\hbox{$#1$}\kern     % Convert it with height
                -\expandafter\eeaa\the\fontdimen1\scriptfont1 \ht0 }
\def\accentadjscriptscriptback#1{\setbox0\hbox{$#1$}\kern % Convert it with height
                -\expandafter\eeaa\the\fontdimen1\scriptscriptfont1 \ht0 }
\def\itoverline#1{{\mathsurround0pt\mathchoice
        {\rlap{$\accentadjtext{\displaystyle #1}
                \accentadjtext{\vrule height1.593pt}
                \overline{\phantom{\displaystyle #1}
                \accentadjtextback{\displaystyle #1}}$}{#1}}
        {\rlap{$\accentadjtext{\textstyle #1}
                \accentadjtext{\vrule height1.593pt}
                \overline{\phantom{\textstyle #1}
                \accentadjtextback{\textstyle #1}}$}{#1}}
        {\rlap{$\accentadjscript{\scriptstyle #1}
                \accentadjscript{\vrule height1.593pt}
                \overline{\phantom{\scriptstyle #1}
                \accentadjscriptback{\scriptstyle #1}}$}{#1}}
        {\rlap{$\accentadjscriptscript{\scriptscriptstyle #1}
                \accentadjscriptscript{\vrule height1.593pt}
                \overline{\phantom{\scriptscriptstyle #1}
                \accentadjscriptscriptback{\scriptscriptstyle #1}}$}{#1}}}}
\newcommand{\alp}{\alpha}
\newcommand{\al}{\alpha}
\newcommand{\be}{\beta}
\newcommand{\eps}{\varepsilon}
\newcommand{\la}{\lambda}
\newcommand{\ga}{\gamma}
\newcommand{\Ga}{\Gamma}
\newcommand{\Om}{\Omega}
\renewcommand{\phi}{\varphi}
\newcommand{\p}{{$p\mspace{1mu}$}}
\newcommand{\clEp}{{\itoverline{E}\mspace{1mu}}^p}
\newcommand{\bdyp}{\bdy_p} % Fine boundary
\newcommand{\R}{\mathbf{R}}
\newcommand{\eR}{{\overline{\R}}}
\newcommand{\K}{{\cal K}}
\newcommand{\limplus}{{\mathchoice{\vcenter{\hbox{$\scriptstyle +$}}}
  {\vcenter{\hbox{$\scriptstyle +$}}}
  {\vcenter{\hbox{$\scriptscriptstyle +$}}}
  {\vcenter{\hbox{$\scriptscriptstyle +$}}}
}}
\newcommand{\Np}{N^{1,p}}
\newcommand{\Nploc}{N^{1,p}\loc}
\newcommand{\Npploc}{N^{1,p}_{\textup{fine-loc}}}
\newcommand{\Lploc}{L^{p}\loc}
\numberwithin{equation}{section}
\newcommand{\imp}{\ensuremath{\Rightarrow}}
\newenvironment{ack}{\medskip{\it Acknowledgement.}}{}
\begin{document}

\authortitle{Anders Bj\"orn, Jana Bj\"orn and Visa Latvala}
            {Convergence and local-to-global results
for \p-superminimizers on
quasiopen sets}
\author{
Anders Bj\"orn \\
\it\small Department of Mathematics, Link\"oping University, SE-581 83 Link\"oping, Sweden\\
\it \small anders.bjorn@liu.se, ORCID\/\textup{:} 0000-0002-9677-8321
\\
\\
Jana Bj\"orn \\
\it\small Department of Mathematics, Link\"oping University, SE-581 83 Link\"oping, Sweden\\
\it \small jana.bjorn@liu.se, ORCID\/\textup{:} 0000-0002-1238-6751
\\
\\
Visa Latvala \\
\it\small Department of Physics and Mathematics,
University of Eastern Finland, \\
\it\small  P.O. Box 111, FI-80101 Joensuu,
Finland\/{\rm ;} \\
\it \small visa.latvala@uef.fi, ORCID\/\textup{:} 0000-0001-9275-7331
}
%\date{Preliminary version, \today}
\date{}

\maketitle

\noindent{\small
{\bf Abstract.}
In this paper,  several convergence results for
fine  \p-(super)minimizers on quasiopen sets
in metric spaces are obtained.
For this purpose, we
deduce a Caccioppoli-type inequality
and  local-to-global principles for fine \p-(super)minimizers
on quasiopen sets.
A substantial part of these considerations is to show that
the functions belong to a suitable local fine Sobolev space.
We prove our results for a complete metric space equipped with a doubling measure
supporting a \p-Poincar\'e inequality 
with $1<p< \infty$. However,
most of the results are new also for unweighted $\R^n$.
}

\medskip

\noindent {\small \emph{Key words and phrases}:
Caccioppoli inequality,
convergence,
doubling measure,
fine \p-minimizer,
fine \p-super\-mini\-mizer,
fine supersolution,
finely open set,
complete metric space,
nonlinear fine potential theory,
Poincar\'e inequality,
quasiopen set.
}

\medskip

\noindent {\small \emph{Mathematics Subject Classification} (2020):
Primary:
31E05;  % Potential theory on fractals and metric spaces
 Secondary:
30L99, % (2010-now)  Analysis on metric spaces,  None of the above, but in this section
31C40, % Fine potential theory; fine properties of sets and functions
35J92. % (2010-now) Quasilinear elliptic equations with p-Laplacian
}

\section{Introduction}
\label{sect-intro}

Superharmonic functions in classical potential theory are in general
only continuous with respect to the so-called fine topology, rather than the
standard topology.
This is one reason for studying
finely (super)harmonic functions on
finely open sets.
Their theory  has been developed
mainly in the linear axiomatic setting, 
see e.g.\ the monographs
by Fuglede~\cite{Fug} and~Luke\v{s}--Mal\'y--Zaj\'i\v{c}ek~\cite{LuMaZa}.

Finely open sets are 
special cases of
quasiopen sets, which 
coincide with
the superlevel sets of Sobolev functions.
As such, they carry nontrivial Sobolev test functions and are thus
suitable for partial differential equations, such as the \p-Laplace
equation $\Delta_pu=0$.
In the nonlinear case, i.e.\ for $p\ne2$, a study of 
fine \p-(super)solutions to such (and more general)
equations was conducted in 1992 by Kilpel\"ainen--Mal\'y~\cite{KiMa92}
on quasiopen sets in unweighted $\R^n$.
That theory was further extended by Latvala~\cite{LatPhD}, \cite{Lat00}, in
particular for $p=n$. 
Quasiopen sets also 
appear as solutions to certain shape optimization problems
as e.g.\ in Buttazzo--Dal Maso~\cite{buttazzo-dalMaso}
and Fusco--Mukherjee--Zhang~\cite{FuscoMZ}.

In this paper we continue our study
of fine \p-(super)minimizers on quasiopen sets in metric spaces
initiated in~\cite{BBLat4}.
We consider a complete metric space $X$ equipped with a doubling measure $\mu$
supporting a \p-Poincar\'e inequality with $1<p< \infty$.
In this setting, \p-harmonic functions are defined 
as minimizers of the \p-energy using upper gradients.
Since there is (in general) no differential equation, we talk about
fine \p-(super)minimizers, rather than fine
\p-(super)solutions as in~\cite{KiMa92}. 
On $\R^n$, these two notions coincide~\cite[Proposition~5.3]{BBLat4}.
From now on we drop the parameter $p$ and just write fine (super)minimizer.

It turns out that
metric spaces are well suited for studying these notions on finely and
quasiopen sets, since such sets can easily be seen as metric spaces in
their own right, see \cite{BBnonopen}--\cite{BBMaly}.
The function space naturally associated with fine superminimizers
on metric spaces is the
Newton--Sobolev space $\Np$ and its local fine version $\Npploc$
defined through compactly contained \p-strict subsets.
A set $V \subset U$  is a \emph{\p-strict subset} of $U$ if
there is 
$\eta \in \Np(X)$ such that $\eta =1$ on~$V$ and $\eta=0$
outside $U$.
This is denoted
$V \psubset U$, and similarly $V \pSubset U$
when also $V\Subset U$.

The following is a special case of our main result.
Even this special case is stronger than the earlier monotone convergence
results on unweighted $\R^n$ proved in~\cite[Theorems~4.2 and~4.3]{KiMa92},
see the introduction to Section~\ref{sect-seq}.
We are not aware of any other similar convergence results
in the nonlinear case.

\begin{thm} \label{thm-gen-seq-intro}
Let $U \subset X$ be quasiopen and
$\{u_j\}_{j=1}^\infty$ be a sequence of fine superminimizers in $U$
such that $u_j \ge f$ a.e.\ in $U$, $j=1,2,\dots$\,,
for some $f \in \Npploc(U)$.

Assume that one of the following conditions holds
for $u:=\liminf_{j\to\infty} u_j$\/\textup{:}
\begin{enumerate}
\item \label{i-a} 
$u$ is a.e.-bounded\/\textup{;}
\item \label{i-b}
$u \in \Npploc(U)$\/\textup{;}
\item \label{i-c}
$|u-f_V| \le M_V$ a.e.\ in every finely open $V \psubset U$
for some $f_V\in\Np(V)$ and $M_V \ge 0$.
\end{enumerate}
Then $u$ is a fine superminimizer in $U$.
\end{thm}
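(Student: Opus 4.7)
The plan is to reduce all three cases to a common core: first upgrade the given information on $u$ to membership in $\Npploc(U)$, then transfer the fine superminimizer property through the limit on each finely open \p-strict subset $V\pSubset U$, and finally invoke the local-to-global principle announced in the abstract.

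In case \ref{i-b} the regularity is already given. In case \ref{i-a} I would fix $V\pSubset U$ and a constant $M$ exceeding the essential supremum of $u$ on $V$, and replace $u_j$ by $u_j^M:=\min(u_j,M)$; truncation from above preserves the fine superminimizer property, and $\liminf_j u_j^M=u$ a.e.\ on $V$. Since $f\le u_j^M\le M$ on $V$, the Caccioppoli-type inequality for fine superminimizers, applied with an $\Np(X)$ cutoff witnessing $V\psubset U$, provides a bound on $\|g_{u_j^M}\|_{L^p(V)}$ that is uniform in $j$. Reflexivity of $\Np(V)$ then yields a weakly convergent subsequence, and Mazur's lemma combined with Fatou identifies the weak limit with $u$ a.e.\ on $V$. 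Hence $u\in\Np(V)$, and since $V\pSubset U$ was arbitrary, $u\in\Npploc(U)$. Case \ref{i-c} is handled analogously, with $f_V+M_V$ playing the role of $M$. Once the regularity is in hand, I would prove the superminimizer property on each such $V$ by testing against a nonnegative $\phi\in\Np(X)$ whose fine support lies in $V$: the inequality $\int g_{u_j}^p\,d\mu\le\int g_{u_j+\phi}^p\,d\mu$, valid for each $j$, is passed to the limit by weak lower semicontinuity of the upper-gradient functional on the left and by convexity of $t\mapsto t^p$ together with Mazur's lemma on the right. The local-to-global principle then promotes this from \p-strict subsets to the full quasiopen set $U$.

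The main obstacle, I expect, is the regularity step in case \ref{i-a}: a pointwise bound on $u$ alone gives no direct Sobolev control on the individual $u_j$, and the truncation-plus-Caccioppoli argument must be arranged so that the $\Np(X)$ cutoff witnessing $V\psubset U$ is compatible with the quasiopen (not open) structure of $U$. Identifying the weak $\Np(V)$ limit of $u_j^M$ with $u=\liminf_j u_j^M$ is a second delicate point, since weak $\Np$ convergence does not by itself yield pointwise a.e.\ convergence; a Mazur-plus-Fatou argument, or an argument through quasicontinuous representatives, is what I would use to close the gap. Finally, in the local-to-global step the natural test functions on a quasiopen set are only quasicontinuous, so some care is needed to glue the local superminimizer inequalities obtained on the \p-strict subsets $V$ into a global one on $U$.
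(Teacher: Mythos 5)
The central gap is that your argument treats the $\liminf$ as if it were a limit. In step two you extract a weakly convergent subsequence of $u_j^M:=\min\{u_j,M\}$ in $\Np(V)$ and then ``identify the weak limit with $u$''. But the hypothesis only specifies $u=\liminf_j u_j$, and the sequence need not converge in any sense: a weak $\Np(V)$-limit of a subsequence (or of Mazur convex combinations) is in general strictly larger than the pointwise $\liminf$. Consider $u_j$ oscillating between two distinct fine superminimizers; the $\liminf$ is their pointwise minimum, while any weak subsequential limit lies between them. So even after the Sobolev bound is established, the regularity $u\in\Npploc(U)$ does not follow by your route, and the same obstruction reappears when you try to pass the superminimizer inequality to the limit. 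The paper circumvents this by a two--stage monotone scheme: with $v$ a solution of an obstacle problem below all $u_j$, the functions $v_{k,j}=\min\{u_k,\dots,u_j,v+l\}$ are decreasing in $j$ and increasing in $k$, so one can first invoke the decreasing-limit result (Proposition~\ref{prop-decr-lim-supermin-Np}), then the increasing one (Theorem~\ref{thm-incr-lim-supermin-new}), and the double limit recovers $\min\{u,v+l\}$ precisely because $\min$ commutes with $\liminf$.

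There is also a concrete flaw in the Caccioppoli step. Theorem~\ref{thm-cacc-intro} requires a sandwich $|u_j^M-g|\le M'$ with $g\in\Np$ and a \emph{constant} $M'$. Your bounds are $\min\{f,M\}\le u_j^M\le M$; since $f$ is only in $\Npploc(U)$ and may be unbounded below, $u_j^M-\min\{f,M\}$ need not be bounded, so the Caccioppoli hypothesis is not met. This is precisely why the paper replaces the raw lower bound $f$ by the obstacle solution $v$: by the comparison principle $v\le u_j$ q.e., $v$ is itself a fine superminimizer in $\Np(W)$, and the truncation $\min\{u_j,v+l\}$ sits between $v$ and $v+l$, giving a constant sandwich width $l$. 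Relatedly, in case~\ref{i-c} you propose truncating above by $f_V+M_V$; this is a function, not a constant, and $\min\{u_j,f_V+M_V\}$ is not a fine superminimizer unless $f_V+M_V$ is one, so that case cannot be reduced this way. The high-level plan (upgrade to $\Npploc$, prove superminimizing on each $V\pSubset U$, apply the local-to-global principle) matches the paper's architecture, but the specific devices you propose for the two middle steps do not close the argument.
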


For monotone sequences of fine minimizers this reduces to
the following simpler statement.

\begin{cor} \label{cor-monotone-min-intro}
Let $U \subset X$ be quasiopen and
$\{u_j\}_{j=1}^\infty$ be a monotone, or uniformly converging, sequence of fine minimizers in $U$.

Then $u:=\lim_{j\to\infty} u_j$ is a fine minimizer in~$U$,
provided that 
one of the conditions~\ref{i-a}--\ref{i-c} in Theorem~\ref{thm-gen-seq-intro}
holds.
\end{cor}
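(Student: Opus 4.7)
The plan is to deduce Corollary~\ref{cor-monotone-min-intro} from Theorem~\ref{thm-gen-seq-intro} by using the fact that $u$ is a fine minimizer on $U$ if and only if both $u$ and $-u$ are fine superminimizers on $U$. I would apply Theorem~\ref{thm-gen-seq-intro} once to $\{u_j\}$, to conclude that $u=\lim u_j$ is a fine superminimizer, and once to $\{-u_j\}$, to conclude that $-u$ is a fine superminimizer; the two together yield that $u$ is a fine minimizer.

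The sequence $\{-u_j\}$ is again a sequence of fine minimizers (negation preserves the minimizer property) and inherits monotonicity, in the opposite direction, or uniform convergence. In both settings $\liminf u_j = u$ and $\liminf(-u_j) = -u$, so the limits produced by Theorem~\ref{thm-gen-seq-intro} are exactly the functions I want. Each of conditions~(a)--(c) on $u$ is symmetric under $u\mapsto -u$ (replacing $f_V$ by $-f_V$ in~(c)), so whichever condition is assumed for $u$ also holds for $-u$.

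It remains to verify the lower bound $u_j\ge f$ with $f\in\Npploc(U)$ required by Theorem~\ref{thm-gen-seq-intro}, and the analogous bound for $\{-u_j\}$. Since every fine minimizer lies in $\Npploc(U)$ (as the paper establishes), when $\{u_j\}$ is increasing I would take $f=u_1$; the sequence $\{-u_j\}$ is then decreasing and bounded below by $-u$, which lies in $\Npploc(U)$ directly under~(b), is dominated by a constant (which is locally in~$\Np$) under~(a), and is controlled locally via~(c). The monotone decreasing case is symmetric, and for uniform convergence the uniform-Cauchy estimate yields $|u_j-u_1|\le M$ for some constant $M$, so $u_j\ge u_1-M$ and $-u_j\ge -u_1-M$, both in $\Npploc(U)$.

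The step I expect to require the most care is producing a single globally defined $\Npploc(U)$-minorant in case~(c), since the data $f_V\in\Np(V)$ there are only specified locally on each finely open $V\psubset U$. However, $\Npploc(U)$ is itself a strictly local space defined through such subsets, so either $u$ itself can be shown to belong to $\Npploc(U)$ under~(c) (and then serves as the minorant for a decreasing sequence), or a local patching argument supplies one; in either case this is the only genuinely new technical input beyond invoking Theorem~\ref{thm-gen-seq-intro}.
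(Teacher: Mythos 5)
Your approach matches the paper's: the paper deduces Corollary~\ref{cor-monotone-min-intro} as a special case of its last corollary, which in turn is Theorem~\ref{thm-gen-seq} applied to both $\{u_j\}_{j=1}^\infty$ and $\{-u_j\}_{j=1}^\infty$ via the fact that fine minimizers are exactly functions that are simultaneously fine super- and subminimizers. You invoke Theorem~\ref{thm-gen-seq-intro} rather than the more general Theorem~\ref{thm-gen-seq}, but the underlying idea is the same.

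Two small repairs are needed. First, in the uniform convergence case, $|u_j-u_1|\le M$ for all $j$ is not guaranteed: fine minimizers may be unbounded, so $u_N-u_1$ need not be bounded for any fixed $N$. What the uniform Cauchy property does give is $|u_j-u_N|\le 2$ for all $j\ge N$ and some large $N$; since $\liminf_{j\to\infty}u_j$ is unchanged after discarding finitely many terms, one should simply pass to the tail $j\ge N$ and take $f=u_N-2\in\Npploc(U)$. Second, for condition~\ref{i-c} in the increasing case, your worry about a global minorant for $\{-u_j\}$ is legitimate but is resolved by fixing the order of the two applications: apply Theorem~\ref{thm-gen-seq-intro} to $\{u_j\}_{j=1}^\infty$ first with $f=u_1$ (which always works regardless of which of~\ref{i-a}--\ref{i-c} is assumed). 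This produces $u$ as a fine superminimizer and therefore $u\in\Npploc(U)$; then $-u\in\Npploc(U)$ serves as the global minorant for the decreasing sequence $\{-u_j\}_{j=1}^\infty$. The decreasing case is symmetric (apply to $\{-u_j\}$ first), and the uniform case uses $u_N\mp 2$ as above. With these adjustments the argument is complete.
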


One of our motivations  for studying such
convergence results is 
that they provide fundamental tools for further studies,
such as fine Perron solutions, which we pursue in a
forthcoming paper.
For \p-Laplace type equations on open sets in weighted $\R^n$, such monotone convergence
results were obtained and systematically used in 
Heinonen--Kilpel\"ainen--Martio~\cite{HeKiMa}.
For developing fine potential theory, convergence results
are even more central since there are less other tools available.

A property usually taken as an axiom in most
axiomatic potential theories is the sheaf property,
which says that if $u$ is superharmonic in $U_\alp$ for each $\alp$,
then it is superharmonic in $\bigcup_{\alp} U_\alp$.
The definitions of 
finely superharmonic functions in Fuglede~\cite[Definition~8.1]{Fug}
(in the linear theory)
and 
in Kilpel\"ainen--Mal\'y~\cite[Definition~5.5]{KiMa92}
(in the nonlinear theory)
are based on (and yield) the sheaf property.
This property
is usually obvious for
supersolutions of differential equations on open sets as well,
but is not known in connection with  minimization problems,
see \cite[Open problems~9.22 and~9.23]{BBbook}.

Our notion of fine superminimizers
is on open sets equivalent to the standard 
superminimizers,
by Corollary~5.6 in~\cite{BBLat4}. 
The same is true for the
fine supersolutions introduced 
in~\cite{KiMa92}.
In contrast, Fuglede~\cite[Th\'eor\`eme~3.2]{Fug74} gave an example
of a finely harmonic function  in the sense of~\cite{Fug}
on the entire space $\R^n$, $n \ge 3$,
which has an essential singularity at the origin and is
therefore
\emph{not} superharmonic on all of $\R^n$.
On the other hand, Luke\v{s}--Mal\'y--Zaj\'i\v{c}ek~\cite[Section~12.A]{LuMaZa}
use a more restrictive definition of finely superharmonic functions than Fuglede.
Their definition lacks the sheaf property, but
instead has the property that  a finely
superharmonic function on an open set is a standard superharmonic function,
similarly to our fine superminimizers.
See also Remark~\ref{rmk-local-to-global-sheaf}.

As a partial substitute for the absent sheaf property, we obtain several
``local-to-global principles'' in Section~\ref{sect-char}.
The following is a special case of Corollary~\ref{cor:localtoglobal-2}, 
which generalizes \cite[Theorem~4.2\,(a)]{KiMa92}.
The removability of $E$ generalizes \cite[Lemma~8.1]{BBLat4}.

\begin{prop}  \label{prop-local-global-intro}
Let $U \subset X$ be quasiopen and $E \subset U$ be such that $\Cp(E)=0$.
Assume that $u$ is a fine\/ \textup{(}super\/\textup{)}\-minimizer in $V$
for every finely open $V \pSubset U \setm E$.

Then $u$ is a fine\/ \textup{(}super\/\textup{)}minimizer in~$U$,
provided that 
one of the conditions~\ref{i-a}--\ref{i-c} in Theorem~\ref{thm-gen-seq-intro}
holds.
\end{prop}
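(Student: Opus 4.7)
The plan proceeds in two stages: first establishing that $u \in \Npploc(U)$, and then verifying the (super)minimization inequality on all of $U$, with the capacity zero set $E$ handled by removability.

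For the first stage, if (b) holds there is nothing to do. Under (a) or (c), I would apply the Caccioppoli-type inequality proved earlier in the paper to $u$ on each finely open $V \pSubset U \setm E$, using the bound for $|u|$ (from (a)) or the comparison function $f_V \in \Np(V)$ with $|u - f_V| \le M_V$ (from (c)). This yields an $\Np(V)$-estimate for $u$ on every such $V$. To reach $\Npploc(U)$, I then use that $\Cp(E)=0$: given any finely open $V \pSubset U$ with witness $\eta \in \Np(X)$, one can modify $\eta$ on $E$ (using that capacity zero sets are $\Np$-removable) to produce a witness $\tilde \eta$ vanishing on $E$, showing $V \setm E \pSubset U \setm E$; the Caccioppoli estimate then applies on such $V\setm E$, and since $\mu(E)=0$ the resulting bound is an $\Np(V)$-bound for~$u$.

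For the second stage, fix a finely open $V \pSubset U$ and a test function $\phi \in \Np(X)$ supported in $V$ (and nonnegative in the superminimizer case). Using the same removability trick, produce a finely open $V' \pSubset U \setm E$ covering $V \setm E$. Since $\Cp(E)=0$ and $\phi$ can be modified on $E$ without changing its $\Np$-class or its upper gradient $\mu$-a.e., the test function $\phi$ (or a capacity zero modification of it) is admissible in the definition of fine (super)minimizer on $V'$. Applying the hypothesis there gives
\[
\int_{V'} g_u^p \, d\mu \le \int_{V'} g_{u+\phi}^p \, d\mu,
\]
and since $\phi \equiv 0$ outside $V$, and $\mu(E)=0$, both integrals equal the corresponding ones over $V$. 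This is precisely the fine (super)minimizer inequality on $U$.

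The main obstacle is Stage~1: showing that $u \in \Npploc(U)$ from purely local information on $V \pSubset U \setm E$. The hypothesis gives $u \in \Np(V)$ for each such $V$, but the $\Np$-norm could a priori blow up as $V$ exhausts $U \setm E$, in particular near $E$. Conditions~\ref{i-a}--\ref{i-c} are exactly what is needed to tame the right-hand side in the Caccioppoli inequality and produce a uniform bound as one approaches both $\bdry U$ and $E$. Once this membership is secured, the rest of the argument is essentially bookkeeping enabled by the $\Cp(E)=0$ removability, analogous to \cite[Lemma~8.1]{BBLat4}.
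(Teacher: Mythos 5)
Your overall two-stage outline (first secure $u\in\Npploc(U)$, then verify the minimizing inequality with the removability of $E$) is the same as the paper's, which reduces all cases to condition~\ref{i-b} and then invokes Proposition~\ref{prop-superobstacle}. You also correctly identify the Caccioppoli inequality as the engine for Stage~1. However, both stages as you describe them have a genuine gap in the way $E$ is removed.

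In Stage~2 you claim that, given finely open $V\pSubset U$, one can ``produce a finely open $V'\pSubset U\setm E$ covering $V\setm E$.'' This fails in general: $\pSubset$ requires $\overline{V'}$ to be a compact subset of $U\setm E$, and if $E$ is, say, a countable dense subset of $V$ with $\Cp(E)=0$, no set containing $V\setm E$ can have closure avoiding $E$. The same issue infects Stage~1, where you pass from $V\setm E\psubset U\setm E$ to ``the Caccioppoli estimate applies'': the hypothesis only grants the superminimizing property on finely open $V\pSubset U\setm E$, and $V\setm E$ need not be one of these, nor need it sit inside one. There is a second, more subtle gap: even on a finely open $V_j\pSubset U\setm E$ where $u$ is a fine superminimizer, the definition only yields the energy inequality for test functions supported in some $W\pSubset V_j$, not for arbitrary $\phi_j\in\Np_0(V_j)$ (note $V_j\pSubset V_j$ is impossible). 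The paper bridges this via Lemma~\ref{lem-super-obst}, converting the superminimizing property into ``$u$ solves the $\K_{u,u}(V_j)$-obstacle problem,'' which does admit arbitrary competitors in $\Np_0(V_j)$.

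The missing ingredient that repairs both stages is the density result Proposition~\ref{prop-density}: given $0\le\phi\in\Np_0(U)$, one approximates $\phi$ in $\Np$-norm (and monotonically, q.e.) by bounded $\phi_j\in\Np_0(V_j)$ with finely open $V_j\pSubset U\setm E$. For Stage~1 this is packaged in Lemma~\ref{lem:bdd-super-seq-uj-W'} (applied with the constant sequence $u_j=u$), where one approximates the cutoff $\eta$ rather than a test function, and the finiteness of $\int_W g_u^p\,d\mu$ follows by monotone convergence. For Stage~2 one approximates the test function, applies the obstacle-problem comparison on each $V_j$, and passes to the limit. So your intuition about \emph{why} $E$ is harmless is right, but the mechanism is an exhaustion/approximation argument, not a single-set covering, and the obstacle problem is needed to upgrade the admissible test functions.
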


These ``local-to-global principles''
play a crucial role when deducing the convergence
results in Section~\ref{sect-seq}, including Theorem~\ref{thm-gen-seq-intro}.
The obstacle problem, studied in~\cite{BBnonopen} and~\cite{BBLat4}, 
is also used as a fundamental tool.
As we shall see, the most difficult
part when showing that a function is a fine superminimizer
is often to show that it belongs to the fine local space $\Npploc(U)$.
A principal tool for achieving this 
is the following Caccioppoli-type inequality, 
proved in Section~\ref{sect-Cacc}.

\begin{thm}  \label{thm-cacc-intro}
  \textup{(Caccioppoli-type inequality)}
Let $u$ be a fine superminimizer in $U$ such that $|u-f|\le M$
  a.e.\ in $U$ for some $f\in\Np(U)$ and $M\ge0$.
Then
\begin{equation} \label{eq-cacc-lemma}
\int_{U} g_u^p \eta^p \,d\mu
\le 2^p \int_U \eta^p g_f^p \, d\mu + (4pM)^p \int_U g_\eta^p \, d\mu
\end{equation}
for every nonnegative $\eta \in \Np(X)$ with $\eta=0$ outside $U$.
\end{thm}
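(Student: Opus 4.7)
The plan is to apply the fine superminimizer inequality with the test function
\[
\phi_\eps := \eta^p(M+\eps+f-u), \qquad \eps>0,
\]
which is nonnegative a.e.\ in $U$ (since $|u-f|\le M$ gives $M+\eps+f-u\ge\eps$ a.e.) and vanishes outside $U$, and then to extract \eqref{eq-cacc-lemma} via a Jensen-type convexity trick. By homogeneity of degree $p$ in $\eta$, I may assume $0\le\eta\le 1$ (either by rescaling $\eta$ by $1/\|\eta\|_\infty$ when $\eta$ is bounded, or by truncating $\eta$ at level $k$ and using monotone convergence as $k\to\infty$). The estimate will be proved for fixed $\eps>0$ and then $\eps\to 0^+$ will give the final statement.

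Set $v_\eps:=u+\phi_\eps=(1-\eta^p)u+\eta^p(f+M+\eps)$, which is pointwise a convex combination of $u$ and $f+M+\eps$ with weights in $[0,1]$. The standard convex-combination upper gradient estimate, combined with the chain rule $g_{\eta^p}\le p\eta^{p-1}g_\eta$ and the bound $|u-f-M-\eps|\le 2M+\eps$ a.e.\ in $U$, gives
\[
g_{v_\eps}\le(1-\eta^p)g_u+\eta^p g_f+(2M+\eps)p\eta^{p-1}g_\eta.
\]
The crucial algebraic observation is that on $\{\eta>0\}$, the identity $(2M+\eps)p\eta^{p-1}g_\eta=\eta^p\cdot(2M+\eps)p\eta^{-1}g_\eta$ turns this into a genuine convex combination
\[
g_{v_\eps}\le(1-\eta^p)g_u+\eta^p\bigl(g_f+(2M+\eps)p\eta^{-1}g_\eta\bigr),
\]
with weights $(1-\eta^p)$ and $\eta^p$ summing to $1$. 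Applying Jensen's inequality for the convex function $t\mapsto t^p$ ($p\ge 1$) yields pointwise on $\{\eta>0\}$
\[
g_{v_\eps}^p\le(1-\eta^p)g_u^p+\eta^p\bigl(g_f+(2M+\eps)p\eta^{-1}g_\eta\bigr)^p.
\]

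The fine superminimizer inequality gives $\int_{\{\phi_\eps>0\}}g_u^p\,d\mu\le\int_{\{\phi_\eps>0\}}g_{v_\eps}^p\,d\mu$, and since $\{\phi_\eps>0\}=\{\eta>0\}$ a.e., subtracting $\int(1-\eta^p)g_u^p\,d\mu$ from both sides and using $\eta^p\cdot\eta^{-p}=1$ on $\{\eta>0\}$ yields
\[
\int_U\eta^p g_u^p\,d\mu\le\int_U\bigl(\eta g_f+(2M+\eps)pg_\eta\bigr)^p\,d\mu.
\]
Letting $\eps\to 0^+$ by dominated convergence and applying the elementary inequality $(a+b)^p\le 2^p(a^p+b^p)$ gives \eqref{eq-cacc-lemma} with exactly the stated constants $2^p$ and $(4pM)^p=2^p(2pM)^p$. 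The principal technical obstacle I anticipate is verifying that $\phi_\eps$ is a legitimate admissible test function in the definition of fine superminimizer used in the paper: this requires $\phi_\eps\in\Np(X)$ with suitable vanishing properties outside $U$, which is obtained from $f\in\Np(U)$, the boundedness $|u-f|\le M$ (so $\phi_\eps$ is bounded), $\eta\in\Np(X)$, and the product rule for upper gradients. A secondary but more routine point is the precise form of the convex-combination bound $g_v\le\alpha g_A+(1-\alpha)g_B+|A-B|g_\alpha$ for the minimal $p$-weak upper gradient with measurable weight $\alpha\in[0,1]$, which is standard in the Newtonian metric-space theory.
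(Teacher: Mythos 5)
Your algebraic strategy is in fact the same one the paper uses: test the fine superminimizer inequality against $\phi=\eta^p(f+M-u)$ (the $\eps$-shift is a harmless variant; the paper instead redefines $u$ on a null set so $|u-f|\le M$ holds everywhere), view $v=u+\phi$ as a convex combination of $u$ and $f+M$ with weights $1-\eta^p$ and $\eta^p$, apply the Leibniz/convex-combination upper gradient estimate \cite[Lemma~2.18]{BBbook}, invoke convexity of $t\mapsto t^p$, and subtract $\int(1-\eta^p)g_u^p\,d\mu$. The constants you obtain agree with \eqref{eq-cacc-lemma}.

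However there is a genuine gap: you never justify that $\phi_\eps$ is an \emph{admissible} test function. By Definition~\ref{def-finesuper}, the superminimizer inequality is only available for $\phi\in\Np_0(V)$ with $V\pSubset U$ \emph{finely open and compactly contained}. A generic nonnegative $\eta\in\Np(X)$ vanishing outside $U$ (for instance one that is strictly positive on all of $U$, when $U$ is not itself compactly $p$-strictly contained in itself) yields $\{\eta>0\}$ that is not $\pSubset U$, so there is no $V\pSubset U$ into which $\phi_\eps$ fits. The properties you list as resolving the ``principal technical obstacle'' --- $f\in\Np(U)$, boundedness of $u-f$, $\eta\in\Np(X)$, the product rule --- show $\phi_\eps$ is a Newtonian function, but they do \emph{not} produce the required compactly contained $p$-strict subset. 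Likewise $u\in\Npploc(U)$ only gives $u\in\Np(V)$ on finely open $V\pSubset U$, so even forming $g_u$ on $\{\eta>0\}$ needs the containment. The paper handles all of this in two steps: first prove \eqref{eq-cacc-lemma} assuming $0\le\eta\le1$ and $\{\eta>0\}\pSubset U$, and then pass to a general $\eta\in\Np_0(U)$ via Proposition~\ref{prop-density}, which supplies finely open $V_j\pSubset U$ and bounded $\eta_j\in\Np_0(V_j)$ with $\eta_j\nearrow\eta$ and $\|\eta-\eta_j\|_{\Np(X)}\to0$, followed by monotone convergence. Your rescaling/truncation reduction to $0\le\eta\le1$ does not substitute for this exhaustion, since truncating in the \emph{value} of $\eta$ does not shrink its support. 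Without this approximation step the proof is incomplete.
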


The compactly contained \p-strict subsets $V$, appearing in 
the definitions of $\Npploc$ and fine superminimizers,  as well as in 
Theorem~\ref{thm-gen-seq-intro} and 
Proposition~\ref{prop-local-global-intro}, were introduced by 
Kilpel\"ainen--Mal\'y~\cite{KiMa92} in $\R^n$ as a substitute for 
compactly contained open subsets, which are usually used to define 
the local space $\Nploc$ and supersolutions on open sets.
One of the
main difficulties, when dealing with the fine local space $\Npploc$
and fine superminimizers 
is that we do not know
the answer to the following question.

\begin{openprob} \label{openprob-pSubset-intro}
Given $A \pSubset U$, is there $W$ such that
$A \pSubset W \pSubset U$?
\end{openprob}

This is easily seen to be true if $U$ is open, but not known for
quasiopen $U$, even in $\R^n$.
As a consequence, some of our results and formulations are more cumbersome,
such as 
Theorem~\ref{thm-gen-seq-intro} and 
Proposition~\ref{prop-local-global-intro}
(and their generalizations Corollary~\ref{cor:localtoglobal-2} and Theorem~\ref{thm-gen-seq}).
This is also reflected in the formulation
of Theorem~4.2 in~\cite{KiMa92}.
Even the existence of $W$ such that
$A \psubset W\pSubset U$ would have important consequences,
cf.\ condition~\ref{m-d} in Corollary~\ref{cor:localtoglobal-2}
and Theorem~\ref{thm-gen-seq}.

The outline of the paper is as follows.
In Section~\ref{sect-prelim} we present the main background definitions
from nonlinear potential theory in metric spaces, while in
Section~\ref{sect-fine-cont} we introduce the fine topology
and the fine local Newtonian space $\Npploc(U)$.
Following our recent paper~\cite{BBLat4},
we introduce fine superminimizers and the fine obstacle problem
in Section~\ref{sect-fine-super-obst}.

In Section~\ref{sect-Cacc} we deduce the Caccioppoli-type inequality
(Theorem~\ref{thm-cacc-intro}),
whereas Section~\ref{sect-char}
is devoted to characterizations and local-to-global principles
for fine superminimizers.
Finally, in Section~\ref{sect-seq} we deduce several
convergence results for fine superminimizers
and one for fine minimizers.
Throughout the paper we utilize most of the results
obtained in \cite{BBLat4}.

\begin{ack}
A.~B. and J.~B. were supported by the Swedish Research Council,
grants 2016-03424 and 2020-04011 resp.\ 621-2014-3974 and 2018-04106.
Part of this research was done 
when V.~L. visited Link\"oping University in 2019 and 2022.
\end{ack}

\section{Notation and preliminaries}
\label{sect-prelim}

In this section, we introduce
the necessary metric space concepts used in this paper.
For brevity, we refer to
Bj\"orn--Bj\"orn--Latvala~\cite{BBLat1}, \cite{BBLat2}
for more
extensive introductions, and references to the literature.
See also the monographs Bj\"orn--Bj\"orn~\cite{BBbook} and
Heinonen--Koskela--Shanmugalingam--Tyson~\cite{HKST},
where the theory of upper gradients and Newtonian spaces
is thoroughly  developed with proofs.

Let $X$ be a metric space equipped
with a metric $d$ and a positive complete  Borel  measure $\mu$
such that $\mu(B)<\infty$ for all balls $B \subset X$.
We also assume that $1<p< \infty$.

A \emph{curve} is a continuous mapping from an interval,
and a \emph{rectifiable} curve is a curve with finite length.
We will only consider curves which are nonconstant, compact and rectifiable,
and they can therefore be parameterized by their arc length $ds$.
A property holds for \emph{\p-almost every curve}
if the curve family $\Ga$ for which it fails has zero \p-modulus,
i.e.\ there is $\rho\in L^p(X)$ such that
$\int_\ga \rho\,ds=\infty$ for every $\ga\in\Ga$.

  A measurable
  function $g:X \to [0,\infty]$ is a \p-weak \emph{upper gradient}
of $u:X \to \eR:=[-\infty,\infty]$
if for \p-almost all curves
$\gamma: [0,l_{\gamma}] \to X$,
\begin{equation*} %\label{ug-cond}
        |u(\gamma(0)) - u(\gamma(l_{\gamma}))| \le \int_{\gamma} g\,ds,
\end{equation*}
where the left-hand side is $\infty$
whenever at least one of the
terms therein is infinite.
If $u$ has a \p-weak upper gradient in $\Lploc(X)$, then
it has a \emph{minimal \p-weak upper gradient}
$g_u \in \Lploc(X)$
in the sense that
$g_u \le g$ a.e.\
for every \p-weak upper gradient $g \in \Lploc(X)$ of $u$.

For measurable $u$, we let
\[
        \|u\|_{\Np(X)} = \biggl( \int_X |u|^p \, d\mu
                + \inf_g  \int_X g^p \, d\mu \biggr)^{1/p},
\]
where the infimum is taken over all \p-weak upper gradients of $u$.
The \emph{Newtonian space} on $X$ is
\[
        \Np (X) = \{u: \|u\|_{\Np(X)} <\infty \}.
\]

The space $\Np(X)/{\sim}$, where  $u \sim v$ if and only if $\|u-v\|_{\Np(X)}=0$,
is a Banach space and a lattice.
In this paper it is convenient to assume that functions in $\Np(X)$
 are defined everywhere (with values in $\eR$),
not just up to an equivalence class in the corresponding function space.

For an arbitrary set $A \subset X$,  we let
\[
  \Np_0(A)=\{u|_{A} : u \in \Np(X) \text{ and }
  u=0 \text{ on } X \setm A\}.
\]
Functions from $\Np_0(A)$ can be extended by zero in $X\setm A$ and we
will regard them in that sense when needed.

The  \emph{Sobolev capacity} of an arbitrary set $A\subset X$ is
\[
\CpX(A)=\inf_{u}\|u\|_{\Np(X)}^p,
\]
where the infimum is taken over all $u \in \Np(X)$ such that
$u\geq 1$ on $A$.

A property holds \emph{quasieverywhere} (q.e.)\
if the set of points  for which it fails has capacity zero.
The capacity is the correct gauge
for distinguishing between two Newtonian functions.
If $u \in \Np(X)$, then $v \sim u$ if and only if $v=u$ q.e.
Moreover, if $u,v \in \Np(X)$ and $v= u$ a.e., then $v=u$ q.e.
The superscript in $\CpX$ indicates dependence on the underlying space.
We will usually omit $X$ and write $\Cp$ instead of $\CpX$.

A set $U\subset X$ is \emph{quasiopen} if for every
$\varepsilon>0$ there is an open set $G\subset X$ such that $\Cp(G)<\varepsilon$
and $G\cup U$ is open. The quasiopen sets do not in general form a topology,
see Remark~9.1 in~\cite{BBnonopen}.
However it follows easily from the countable subadditivity of $\Cp$
that countable unions and finite intersections of quasiopen sets are
quasiopen.
Moreover, quasiopen sets are measurable.
Various characterizations of  quasiopen sets can be found in
Bj\"orn--Bj\"orn--Mal\'y~\cite{BBMaly}.
If $U\subset X$ is
quasiopen then $\CpU$ and $\CpX$ have the same zero sets in $U$,
  by Proposition~4.2 in~\cite{BBMaly}
  (and Remark~3.5 in Shan\-mu\-ga\-lin\-gam~\cite{Sh-harm}).

For a measurable set $E\subset X$, the Newtonian space $\Np(E)$
and the capacity $\CpE$ are
defined by
considering $(E,d|_E,\mu|_E)$ as a metric space in its own right.

If $E \subset A$ are subsets of $X$, then
the \emph{variational capacity} of $E$ with respect to $A$ is
\[
\cp(E,A) = \inf_{u}\int_{X} g_{u}^p\, d\mu,
\]
where the infimum is taken over all $u \in \Np_0(A)$
such that $u\geq 1$ on $E$.
If no such function $u$ exists then $\cp(E,A)=\infty$.

The measure  $\mu$  is \emph{doubling} if
there is $C>0$ such that for all balls
$B=B(x_0,r):=\{x\in X: d(x,x_0)<r\}$ in~$X$,
we have
$        0 < \mu(2B) \le C \mu(B) < \infty$,
where $\lambda B=B(x_0,\lambda r)$.
In this paper, all balls are open.

The space $X$ supports a \emph{\p-Poincar\'e inequality} if
there are $C>0$ and $\lambda \ge 1$
such that for all balls $B \subset X$,
all integrable functions $u$ on $X$ and all \p-weak upper gradients $g$ of $u$,
\begin{equation} \label{PI-ineq}
       \frac{1}{\mu(B)} \int_{B} |u-u_B| \,d\mu
       \le C \diam(B) \biggl( \frac{1}{\mu(\la B)}
              \int_{\lambda B} g^{p} \,d\mu \biggr)^{1/p},
\end{equation}
where $ u_B := \int_B u\, d\mu/\mu(B)$ and we implicitely assume that
$0<\mu(B)<\infty$ for all balls $B$.

In $\R^n$ equipped with a doubling measure $d\mu=w\,dx$, where
$dx$ denotes Lebesgue measure, the \p-Poincar\'e inequality~\eqref{PI-ineq}
is equivalent to the \emph{\p-admissibility} of the weight $w$ in the
sense of Heinonen--Kilpel\"ainen--Martio~\cite{HeKiMa}, see
Corollary~20.9 in~\cite{HeKiMa}
and Proposition~A.17 in~\cite{BBbook}.
Moreover, in this case $g_u=|\nabla u|$ a.e.\ if $u \in \Np(\R^n)$
and the capacities $\Cp$ and $\cp$ coincide
with the corresponding capacities in~\cite{HeKiMa}
(see \cite[Theorem~6.7]{BBbook} and \cite[Theorem~5.1]{BBvarcap}).

As usual, we set
$u_\limplus=\max\{u,0\}$.

\section{Fine topology, \texorpdfstring{$\Npploc(U)$}{Npfineloc(U)} and
\texorpdfstring{\p}{p}-strict subsets}
\label{sect-fine-cont}

\emph{Throughout the rest of the paper, we assume
  that $X=(X,d)$ is a complete metric space 
equipped with a doubling measure $\mu$
supporting a \p-Poincar\'e inequality
with  $1<p< \infty$.
Unless said otherwise, $U$ will be a nonempty
quasiopen set.}

\medskip

To avoid pathological situations we also assume that $X$
contains at least two points (and thus must be uncountable due to the
Poincar\'e inequality).
In this section we recall the basic facts about the fine topology
and Newtonian functions on quasiopen sets.

\begin{deff}\label{deff-thinness}
A set $E\subset X$ is  \emph{thin} at $x\in X$ if
\begin{equation*} %  \label{deff-thin}
\int_0^1\biggl(\frac{\cp(E\cap B(x,r),B(x,2r))}{\cp(B(x,r),B(x,2r))}\biggr)^{1/(p-1)}
     \frac{dr}{r}<\infty.
\end{equation*}
A set $V\subset X$ is \emph{finely open} if
$X\setminus V$ is thin at each point $x\in V$.
\end{deff}

In the definition of thinness,
we use the convention that the integrand
is 1 whenever $\cp(B(x,r),B(x,2r))=0$.
It is easy to see that the finely open sets give rise to a
topology, which is called the \emph{fine topology}.
Every open set is finely open, but the converse is not true in general.
A function $u : V \to \eR$, defined on a finely open set $V$, is
\emph{finely continuous} if it is continuous when $V$ is equipped with the
fine topology and $\eR$ with the usual topology.
Pointwise fine continuity is defined analogously.
The fine interior, fine boundary and fine closure of $E$
are denoted $\fineint E$, $\bdyp E$ and $\clEp$, respectively.
See \cite[Section~11.6]{BBbook} and
Bj\"orn--Bj\"orn--Latvala~\cite{BBLat1}
for further discussion on thinness and the fine topology
on metric spaces.

The following result shows that there is a close
connection between quasiopen and finely open sets.

\begin{thm} \label{thm-finelyopen-quasiopen}
  \textup{(Theorem~3.4 in~\cite{BBLat4})} 
The following conditions are equivalent for any set $U\subset X$\/\textup{:}
\begin{enumerate}
\item \label{tt-a}
$U$ is quasiopen\textup{;}
\item %\label{t-b}
$U=V \cup E$ for some finely open $V$ and a set $E$
with $\Cp(E)=0$\textup{;}
\item \label{t-d}
$\Cp(U\setm \fineint U)=0$\textup{;}
\item \label{t-dd}
$U=\{x:u(x)>0\}$ for some $u\in \Np(X)$.
\end{enumerate}
\end{thm}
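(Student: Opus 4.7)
The plan is to prove the cycle \ref{tt-a}$\,\Leftrightarrow\,$\ref{t-dd} as the main step, note that \ref{t-d} and (b) are trivially equivalent, and close the cycle via the fine continuity of Newtonian functions. First, \ref{t-d}$\,\Leftrightarrow\,$(b) is immediate: from \ref{t-d} take $V=\fineint U$ and $E=U\setm V$; conversely, given (b), $V\subset\fineint U$ forces $U\setm\fineint U\subset E$, hence $\Cp(U\setm\fineint U)=0$.

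For \ref{t-dd}$\,\Rightarrow\,$\ref{tt-a}, I would use that Newtonian functions are quasicontinuous, so for each $\eps>0$ there is an open $G$ with $\Cp(G)<\eps$ such that $u|_{X\setm G}$ is continuous. Then $\{u>0\}\setm G$ is relatively open in $X\setm G$ and hence equals $W\cap(X\setm G)$ for some open $W\subset X$. Consequently $U\cup G=W\cup G$ is open, so $U$ is quasiopen.

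The nontrivial direction is \ref{tt-a}$\,\Rightarrow\,$\ref{t-dd}. For each $n$ use quasiopenness to pick an open $G_n$ with $\Cp(G_n)<2^{-n}$ and $H_n:=U\cup G_n$ open, together with $\eta_n\in\Np(X)$ satisfying $\eta_n\equiv 1$ on $G_n$, $0\le\eta_n\le 1$, and $\|\eta_n\|_{\Np(X)}^p<2^{-n}$. Next, construct a nonnegative Lipschitz $\psi_n$ which is positive exactly on $H_n$, e.g.\ by truncating $d(\cdot,X\setm H_n)$ and multiplying by a fixed global cutoff in $\Np(X)$ to accommodate an unbounded $X$. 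Set $v_n:=\psi_n(1-\eta_n)_\limplus$, so $v_n$ vanishes on $G_n\cup(X\setm H_n)\supset X\setm U$, and form
\begin{equation*}
u:=\sum_{n=1}^{\infty}\frac{2^{-n}v_n}{1+\|v_n\|_{\Np(X)}}\in\Np(X).
\end{equation*}
Since $\sum_n\eta_n$ converges q.e.\ (by summability of $\|\eta_n\|_{\Np(X)}^p$), for q.e.\ $x\in U$ one has $\eta_n(x)<1$ for some $n$, so $v_n(x)>0$ and thus $u(x)>0$. This yields $\{u>0\}=U$ up to a set $F$ of capacity zero; altering $u$ on $F$ (which preserves the Newtonian class, as $F$ has $\Cp$-measure zero) gives $\{u>0\}=U$ exactly.

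Finally, to close the cycle I would deduce \ref{tt-a}$\,\Rightarrow\,$\ref{t-d} using the $u$ just produced: since Newtonian functions are finely continuous quasi-everywhere, at each point $x$ of fine continuity of $u$ with $u(x)>0$, the set $\{u>0\}$ contains a fine neighborhood of $x$, placing $x$ in $\fineint U$. Therefore $U\setm\fineint U$ is contained in the exceptional set of fine discontinuity of $u$, which has capacity zero. The main obstacle will be the construction in \ref{tt-a}$\,\Rightarrow\,$\ref{t-dd}: ensuring that $u\in\Np(X)$ globally (including when $X$ is unbounded), and turning the quasi-everywhere identity $\{u>0\}=U$ into an exact one, requires careful control on the $\Np$-norms of the Lipschitz pieces $\psi_n$ and on the residual capacity-zero set.
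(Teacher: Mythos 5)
The paper does not prove this theorem; it is quoted verbatim from Theorem~3.4 in Bj\"orn--Bj\"orn--Latvala~\cite{BBLat4}, so there is no in-text argument to compare against. Judging your proposal on its own terms, it does contain a genuine logical gap: the implications you establish are \ref{tt-a}$\,\Leftrightarrow\,$\ref{t-dd}, (b)$\,\Leftrightarrow\,$\ref{t-d}, and \ref{tt-a}$\,\Rightarrow\,$\ref{t-d}, which together show that \ref{tt-a} and \ref{t-dd} are equivalent and imply (b) and \ref{t-d}, but nothing in the proposal shows that (b) or \ref{t-d} implies \ref{tt-a} or \ref{t-dd}. Contrary to what you say, deducing \ref{tt-a}$\,\Rightarrow\,$\ref{t-d} at the end does not ``close the cycle''; the cycle still has no arrow out of the pair $\{$(b), \ref{t-d}$\}$. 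The missing implication reduces to the statement that every finely open set is quasiopen, and this is precisely the hard, nontrivial ingredient of the theorem. It cannot be extracted from quasicontinuity or fine continuity q.e.\ of a single Newtonian function (those only go in the direction you already used); it requires a genuine potential-theoretic fact such as the weak Cartan property, or the fine Kellogg/Choquet property, relating thinness of the complement to a capacitary estimate that lets one carve the set out with a small-capacity open set. Without supplying that ingredient, the four conditions are not proved equivalent.

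Beyond the structural gap, the construction for \ref{tt-a}$\,\Rightarrow\,$\ref{t-dd} is not quite right as written when $X$ is unbounded: a ``fixed global cutoff in $\Np(X)$'' that lies in $L^p(X)$ must decay, and if it is a genuine cutoff (vanishing outside a bounded set) then $\psi_n$ would vanish on part of $H_n$ and you would lose $\{u>0\}=U$. The fix is standard but should be stated: cover the open set $H_n$ by countably many balls $B_{n,k}$, take Lipschitz bumps $\phi_{n,k}\ge 0$ supported in $B_{n,k}$ and positive there, and set $\psi_n=\sum_k 2^{-k}\phi_{n,k}/(1+\|\phi_{n,k}\|_{\Np(X)})$, so that $\psi_n\in\Np(X)$, $\psi_n>0$ exactly on $H_n$. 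Your treatment of the residual capacity-zero set (modifying $u$ on it) and the Borel--Cantelli-type argument with the $\eta_n$ are fine, and your implications \ref{t-dd}$\,\Rightarrow\,$\ref{tt-a} (via quasicontinuity) and \ref{tt-a}$\,\Rightarrow\,$\ref{t-d} (via fine continuity q.e.) are correct, but the proof as a whole is incomplete without the converse direction from (b) or \ref{t-d} back to \ref{tt-a}.
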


To define fine superminimizers, we first need
appropriate fine local Sobolev spaces.
Here \p-strict subsets will play a key role, as a substitute for relatively compact subsets.
Recall that $V\Subset U$ if $\overline{V}$ is a compact subset of $U$.

\begin{deff}\label{def-fineloc}
A set $A \subset U$  is a \emph{\p-strict subset} of $U$ if
there is a function $\eta \in \Np_0(U)$ such that $\eta =1$ on~$A$.
We will write $A \psubset U$, and similarly $A \pSubset U$
when also $A\Subset U$.

A function $u$ belongs to $\Npploc(U)$ if $u \in \Np(V)$
for all finely open $V \pSubset U$.
\end{deff}

In the definition of \p-strict subsets, it can be equivalently
required that in addition $0 \le \eta\le 1$,
as in Kilpel\"ainen--Mal\'y~\cite{KiMa92}; just
replace $\eta$ with $\min\{\eta,1\}_\limplus$.
Note that $A \psubset U$ if and only if $\cp(A,U)<\infty$.
Lemma~3.3 in Bj\"orn--Bj\"orn--Latvala~\cite{BBLat3}
shows that every finely open set
$V$ has a base of fine neighbourhoods $W \pSubset V$.
By Theorem~4.4 in~\cite{BBLat3},
functions in $\Npploc(U)$ are
finite q.e., finely continuous q.e.\ and quasicontinuous.

Throughout the paper, we consider minimal
\p-weak upper gradients in $U$.
For a function $u \in \Npploc(U)$ we say that
$g_{u,U}$ is a \emph{minimal \p-weak upper gradient} of $u$ in $U$
if 
\begin{equation*} %\label{deff-guU}
  g_{u,U}=g_{u,V} \text{ a.e.\ in $V$} \quad
  \text{for every finely open $V \pSubset U$},
\end{equation*}
where $g_{u,V}$ is the
minimal \p-weak upper gradient of $u \in \Np(V)$  with respect to~$V$.
If $u \in \Np(U)$, then
this definition agrees
with the definition  of $g_{u,U}$
in
Section~\ref{sect-prelim}.
See \cite[Lemma~5.2 and Theorem~5.3]{BBLat3} for
the
existence, a.e.-uniqueness and minimality of $g_{u,U}$.
If $u\in \Nploc(X)$ then
the minimal \p-weak upper gradients $g_{u,U}$ and $g_u$ with respect to
$U$ and $X$, respectively, coincide a.e.\ in $U$,
see \cite[Corollary~3.7]{BBnonopen} or \cite[Lemma~4.3]{BBLat3}.
For this reason we drop $U$ from the notation and simply write $g_u$
from now on.

\begin{remark} \label{rmk-4.2}

If $u\in\Npploc(U)$ and $W \pSubset U$ is quasiopen,
then $W \setm \fineint W$ has
zero capacity by Theorem~\ref{thm-finelyopen-quasiopen}
(and is thus not seen by $g_u$ because of  \cite[Proposition~1.48]{BBbook}).
Hence $u \in \Np(W)=\Np(\fineint W)$.
Thus $u \in \Npploc(U)$
if and only if
$u \in \Np(W)$ for every quasiopen $W \pSubset U$.
\end{remark}

If follows from the definition of $\Npploc(U)$ and the
  properties of $\Np$ (see Section~\ref{sect-prelim}) that
functions in $\Npploc(U)$ are defined everywhere in $U$,
and that
if $u \in \Npploc(U)$ and $v=u$ q.e., then also $v \in \Npploc(U)$.
Moreover, if $u,v \in \Npploc(U)$ and $u=v$ a.e.\
  then $u=v$ q.e., as we shall now prove.
Due to the definition of $\Npploc(U)$ this
is not covered by
the results in \cite{BBbook}.

\begin{lem} \label{lem-ae-qe}
    Assume that $u,v \in \Npploc(U)$.
    Then the following are true\/\textup{:}
    \begin{enumerate}
 \item \label{eq-ae-qe-=}
If $u=v$ a.e.\ in $U$, then   $u=v$ q.e.\ in $U$.
 \item \label{eq-ae-qe->=}
   If $u \ge v$ a.e.\ in $U$, then   $u \ge v$ q.e.\ in $U$.
   \end{enumerate}
\end{lem}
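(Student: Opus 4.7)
The plan is to deduce both parts from the single claim that any $w\in\Npploc(U)$ vanishing a.e.\ in $U$ must vanish q.e.\ in $U$. Part~\ref{eq-ae-qe-=} follows by applying this to $w:=u-v$, which lies in $\Npploc(U)$ because $\Np(V)$ is a vector space for every finely open $V\pSubset U$. Part~\ref{eq-ae-qe->=} follows by applying the same claim to $w:=(v-u)_\limplus$, which lies in $\Npploc(U)$ by the lattice property of $\Np(V)$ and vanishes a.e.\ since $u\ge v$ a.e.\ in $U$.

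To prove the reduced claim, the main tool I would use is the fine continuity of elements of $\Npploc(U)$ provided by \cite[Theorem~4.4]{BBLat3}: there is a set $N\subset U$ with $\Cp(N)=0$ such that $w$ is finely continuous at every point of $U\setm N$. Since $\Cp(U\setm\fineint U)=0$ by Theorem~\ref{thm-finelyopen-quasiopen}\ref{t-d}, it suffices to show that $w(x)=0$ for each $x\in\fineint U\setm N$. Suppose to the contrary that $w(x)\ne 0$ for such an $x$. Fine continuity of $w$ at $x$ then supplies a set $V_x\subset U$ with $x\in\fineint V_x$ on which $|w|>|w(x)|/2>0$, so $V_x\subset\{w\ne 0\}\cap U$. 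Since $w=0$ a.e., this set has measure zero, forcing $\mu(\fineint V_x)\le\mu(V_x)=0$.

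The main obstacle is to close the contradiction, for which I would invoke the standard density principle that \emph{every nonempty finely open set in $X$ has positive measure}. This is a consequence of the Wiener-type capacity-measure estimates available under our doubling and \p-Poincar\'e hypotheses, and can be cited (or derived) from results in~\cite{BBLat1}. Since $\fineint V_x$ is nonempty (it contains $x$), its having measure zero is impossible, yielding the contradiction. Hence $w=0$ on $\fineint U\setm N$, and because both $N$ and $U\setm\fineint U$ have zero capacity, $w=0$ q.e.\ in $U$, as required.
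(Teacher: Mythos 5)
Your argument is correct but follows a genuinely different path than the paper's. The paper covers $U$ up to a zero-capacity set by countably many finely open $W_j \pSubset U$ (via the base of \p-strict fine neighbourhoods from \cite[Lemma~3.3]{BBLat3} and the quasi-Lindel\"of principle \cite[Theorem~3.4]{BBLat3}), and then simply applies the a.e.-implies-q.e.\ comparison for $\Np(W_j)$-functions \cite[Corollary~1.60]{BBbook} on each piece. You instead reduce both parts to the claim that $w\in\Npploc(U)$ with $w=0$ a.e.\ must vanish q.e., and argue directly from q.e.\ fine continuity together with the fact that nonempty finely open sets have positive measure. Both routes are sound. The paper's proof is shorter because it offloads the work onto the established $\Np$-result and the quasi-Lindel\"of principle; yours is more self-contained at the level of ideas but hinges on the auxiliary measure-positivity fact --- this is indeed true under the standing doubling and \p-Poincar\'e hypotheses (a set of full relative measure in arbitrarily small balls centred at $x$ cannot be thin at $x$), but it deserves a precise reference rather than the vague pointer to \cite{BBLat1}. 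Two small points worth tightening: first absorb the zero-capacity set where $w=\pm\infty$ into $N$ so that $w(x)$ is a nonzero real number at the point under consideration, and note that fine continuity is being invoked relative to the finely open set $\fineint U$, so $V_x$ should be taken finely open inside $\fineint U$ (which you effectively do, since $\fineint U$ is itself finely open).
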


\begin{proof}
Statement~\ref{eq-ae-qe-=} follows directly from \ref{eq-ae-qe->=},
so consider \ref{eq-ae-qe->=}.
By Lemma~3.3 in~\cite{BBLat3},
every $x \in \fineint U$ has a finely open neighbourhood
$W_x \pSubset U$.
By
Theorem~\ref{thm-finelyopen-quasiopen}
and
the quasi-Lindel\"of principle
\cite[Theorem~3.4]{BBLat3}
there is a countable subcollection $\{W_{j}\}_{j=1}^\infty$ of
$\{W_x\}_{x \in \fineint U}$
such that $\Cp(U \setm \bigcup_{j=1}^\infty W_{j})=0$.

By definition, $u, v\in \Np(W_{j})$,
and hence 
$u \ge v$ q.e.\ in $W_j$, $j=1,2,\dots$\,, by  Corollary~1.60 in \cite{BBbook},
and thus q.e.\ in $U$.
\end{proof}

\begin{remark}
  It follows from the covering obtained in the proof
  of Lemma~\ref{lem-ae-qe} that
  Corollaries~2.20 and~2.21 in~\cite{BBbook} extend to functions
  in $\Npploc(U)$.
  That is,
  if $u,v \in \Npploc(U)$ then
$g_u  = g_v$ a.e. on $\{x \in U : u(x)=v(x)\}$.
In particular,
\begin{alignat*}{2}
g_{\max\{u,v\}} &= g_{u} \chi_{\{u > v\}} + g_{v} \chi_{\{v \ge u\}}
&\quad & \text{a.e.},  \\
g_{\min\{u,v\}} &= g_{u} \chi_{\{v > u\}} + g_{v} \chi_{\{u \ge v\}}
&& \text{a.e.}
\end{alignat*}
\end{remark}

We will use the following simple lemma several times,
cf.\ Open problem~\ref{openprob-pSubset-intro}.

\begin{lem} \label{lem-seq-p-strict}
If $A \psubset U$, then there is a quasiopen  $W$ such that
$A \psubset W \psubset U$.
If $A$ is finely open, then $W$ can be chosen to be finely open.
\end{lem}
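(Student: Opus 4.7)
The plan is to take the witness function $\eta \in \Np_0(U)$ for $A \psubset U$, which we may assume satisfies $0 \le \eta \le 1$ and $\eta = 1$ on $A$ (bound by truncation, as noted after Definition~\ref{def-fineloc}), and set
\[
W := \{x \in X : \eta(x) > \tfrac{1}{2}\}.
\]
Since $(\eta - \tfrac{1}{2})_\limplus \in \Np(X)$ and $W$ is its positivity set, Theorem~\ref{thm-finelyopen-quasiopen}\,\ref{t-dd} gives that $W$ is quasiopen. The pointwise identities $\eta = 1$ on $A$ and $\eta \equiv 0$ on $X \setm U$ immediately yield $A \subset W \subset U$.

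Next I would produce the two witness functions as simple truncations of $\eta$. For $A \psubset W$, set
\[
\zeta := \min\{2(\eta - \tfrac{1}{2})_\limplus,\, 1\} \in \Np(X),
\]
which equals $1$ on $A$ and vanishes outside $W$. For $W \psubset U$, set
\[
\xi := \min\{2\eta,\, 1\} \in \Np(X),
\]
which equals $1$ on $W$ (as $\eta > \tfrac{1}{2}$ there) and vanishes outside $U$ (as $\eta \equiv 0$ there). Both belong to $\Np(X)$ since positive parts, constant subtractions and truncations preserve membership in $\Np(X)$.

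For the second assertion, if $A$ is finely open I would replace $W$ by $W' := \fineint W$, which is finely open by definition. Fine openness of $A$ together with $A \subset W$ gives $A \subset \fineint W = W'$. By Theorem~\ref{thm-finelyopen-quasiopen}\,\ref{t-d}, $\Cp(W \setm W') = 0$, so redefining $\zeta$ to be $0$ on $W \setm W'$ produces a function $\zeta'$ that agrees with $\zeta$ quasieverywhere, hence still lies in $\Np(X)$; this $\zeta'$ equals $1$ on $A$ and vanishes outside $W'$, witnessing $A \psubset W'$. The original $\xi$ continues to witness $W' \psubset U$ since $W' \subset W$.

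The argument is essentially bookkeeping, and I do not anticipate a real obstacle. The only mild subtlety is the pointwise-everywhere convention for $\Np$-functions, which is why in the finely open case one cannot just use $\zeta$ directly but must modify it on the capacity-zero set $W \setm \fineint W$ to achieve pointwise vanishing outside $W'$.
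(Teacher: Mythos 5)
Your proof is correct and follows essentially the same approach as the paper: take $\eta\in\Np_0(U)$ with $0\le\eta\le1$, $\eta=1$ on $A$, set $W=\{\eta>\tfrac12\}$, and exhibit the two witness truncations (your $\zeta$ coincides with the paper's $(2\eta-1)_\limplus$ since $\eta\le1$). For the finely open case, you unpack the modification on the capacity-zero set $W\setm\fineint W$ where the paper simply invokes $\Np_0(W)=\Np_0(\fineint W)$; the content is the same.
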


\begin{proof}
By assumption, there is $\eta \in \Np_0(U)$
such that $\eta=1$ on $A$.
We may also assume that $0\le \eta\le 1$.
Let $W:=\{x : \eta(x) > \tfrac12\}$, which is quasiopen by
Theorem~\ref{thm-finelyopen-quasiopen}.
The functions $(2\eta-1)_+\in\Np_0(W)$
and $\min\{2\eta,1\}\in\Np_0(U)$ then show that
$A \psubset W \psubset U$.

If $A$ is finely open, we can replace
$W$ by $\fineint W$, since
Theorem~\ref{thm-finelyopen-quasiopen}
implies that $\Np_0(W)=\Np_0(\fineint W)$.
\end{proof}

The following density result will play a crucial role.

\begin{prop} \label{prop-density}
\textup{(Proposition~4.5 in~\cite{BBLat4})}
Let $0\le \phi \in \Np_0(U)$.
Then there exist  finely open 
$V_j \pSubset U$ and bounded functions
$\phi_j \in \Np_0(V_j)$ such that
\begin{enumerate}
  \item
$V_j \subset V_{j+1}$ and $0 \le \phi_j \le \phi_{j+1} \le \phi$
    for $j=1,2,\dots$\,\textup{;}
\item
  $\|\phi-\phi_j\|_{\Np(X)} \to 0$ and $\phi_j(x) \to \phi(x)$
  for q.e.\ $x \in X$, as $j \to \infty$.
\end{enumerate}
\end{prop}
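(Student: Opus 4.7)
The plan is to construct the $\phi_j$ as products of an upper truncation $\min\{\phi,j\}$ with Sobolev cutoff functions supported on a nested exhaustion of $U$ by finely open \p-strict subsets. The exhaustion is produced from the fine neighborhood base of~\cite{BBLat3} together with the quasi-Lindel\"of principle.

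\textbf{Step 1 (exhaustion by \p-strict subsets).} Applying Lemma~3.3 of~\cite{BBLat3} twice at each $x\in\fineint U$, I produce finely open sets $U_x'\pSubset U_x\pSubset\fineint U$ together with a cutoff $\zeta_x\in\Np_0(U_x)$ satisfying $\zeta_x\equiv 1$ on $U_x'$ and $0\le\zeta_x\le 1$. The quasi-Lindel\"of principle (Theorem~3.4 of~\cite{BBLat3}), combined with Theorem~\ref{thm-finelyopen-quasiopen}, extracts a countable subfamily $\{(U_k',U_k,\zeta_k)\}$ such that $\bigcup_k U_k'$ covers $U$ up to a capacity-zero set. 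Set
\[
V_j:=U_1\cup\dots\cup U_j,\quad V_j':=U_1'\cup\dots\cup U_j',\quad \zeta_j^\ast:=\max_{1\le k\le j}\zeta_k.
\]
Then $V_j'\subset V_j$ are nested finely open sets, and both are \p-strict subsets of $U$ since finite unions preserve $\pSubset$ (closures are finite unions of compact subsets of $U$, and maxima of the associated cutoffs are again cutoffs in $\Np_0(U)$). Moreover $\zeta_j^\ast\in\Np_0(V_j)$, $\zeta_j^\ast\equiv 1$ on $V_j'$, $0\le\zeta_j^\ast\le\zeta_{j+1}^\ast\le 1$, and $\zeta_j^\ast\to 1$ q.e.\ on $U$.

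\textbf{Step 2 (construction).} Define $\phi_j:=\zeta_j^\ast\,\min\{\phi,j\}$. Since $\zeta_j^\ast$ has support in $V_j$ and $\min\{\phi,j\}\in\Np(X)$ is bounded and nonnegative, $\phi_j$ is bounded and lies in $\Np_0(V_j)$. The monotonicity $\phi_j\le\phi_{j+1}\le\phi$ follows from that of the two factors. Pointwise convergence q.e.\ is immediate: for q.e.\ $x\in U$ there is $k$ with $x\in V_k'$, and for all sufficiently large $j$ both $\zeta_j^\ast(x)=1$ and $\min\{\phi(x),j\}=\phi(x)$; outside $U$ all functions vanish.

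\textbf{Step 3 ($\Np$-convergence and the main obstacle).} Split $\phi-\phi_j=(\phi-\min\{\phi,j\})+(1-\zeta_j^\ast)\min\{\phi,j\}$. The first summand tends to zero in $\Np(X)$ by standard truncation arguments, using $g_{(\phi-j)_+}=g_\phi\chi_{\{\phi>j\}}\to 0$ in $L^p$ together with dominated convergence. A Leibniz estimate on the second summand gives
\[
g_{(1-\zeta_j^\ast)\min\{\phi,j\}}\le (1-\zeta_j^\ast)\,g_\phi+\min\{\phi,j\}\,g_{\zeta_j^\ast}.
\]
The first right-hand term vanishes in $L^p$ by dominated convergence, since $\zeta_j^\ast\to 1$ q.e.\ and $g_\phi\in L^p(X)$. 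The main obstacle is controlling the second term: since $\zeta_j^\ast\equiv 1$ on $V_j'$, we have $g_{\zeta_j^\ast}=0$ a.e.\ on $V_j'$, and since $\phi=0$ on $X\setm U$, the integrand is supported on $U\setm V_j'$, a set of shrinking capacity. I would close the estimate by choosing the $U_k'$ sufficiently tight inside $U_k$ (using Lemma~3.3 of~\cite{BBLat3} once more) so that $\cp(U_k',U_k)\le 2^{-kp}$; then $\sum_k\|g_{\zeta_k}\|_p<\infty$ yields $\sup_j g_{\zeta_j^\ast}\le\sum_k g_{\zeta_k}\in L^p(X)$, and dominated convergence together with $\mu(U\setm V_j')\to 0$ finishes the argument.
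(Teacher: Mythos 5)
Your Steps~1--2 are essentially sound: finite unions do preserve the relation $\pSubset$ (compactness passes to finite unions, and the pointwise maximum of the associated cutoffs is again admissible), and the pointwise q.e.\ convergence of $\phi_j=\zeta_j^*\min\{\phi,j\}$ is handled correctly. The first two pieces of your Step~3 decomposition, namely $\phi-\min\{\phi,j\}$ and the term $(1-\zeta_j^*)g_\phi$, also converge to $0$ in $L^p$ as you claim, by dominated convergence with majorant $g_\phi$.

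The gap is in the treatment of the remaining term $\min\{\phi,j\}\,g_{\zeta_j^*}$. Dominated convergence requires a \emph{fixed} $L^1$-majorant for $\min\{\phi,j\}^p\,g_{\zeta_j^*}^p$, and none is produced. The natural candidate $\phi^p\bigl(\sum_k g_{\zeta_k}\bigr)^p$ is only in $L^{p/2}$, since $\phi$ is merely in $L^p$ and not assumed bounded; the bound $\min\{\phi,j\}\le j$ is useless because the truncation level and the cutoff index are the \emph{same} $j$, so $j\,\|g_{\zeta_j^*}\|_p$ does not tend to $0$ (indeed $\|g_{\zeta_j^*}\|_p$ stays bounded away from zero as $\zeta_j^*\to\chi_U$). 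And the observation that the integrand is supported on $U\setm V_j'$, a set of vanishing measure, does not save the day precisely because the factor $\min\{\phi,j\}$ is allowed to blow up there. There is also a circularity in the set-up: you prescribe $\cp(U_k',U_k)\le 2^{-kp}$ with $k$ the index produced \emph{after} the quasi-Lindel\"of extraction, yet the extraction is applied to a family that must already be fixed; this needs rearranging (e.g.\ shrink $U_x'$ according to a rate depending on $x$ before extracting, or diagonalize).

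The main gap is fixable by decoupling the truncation level from the cutoff index: take $\phi_j:=\zeta_j^*\min\{\phi,m_j\}$ with $m_j\nearrow\infty$ chosen slowly enough that
\[
m_j^p\int_{U\setm V_j'}\Bigl(\sum_k g_{\zeta_k}\Bigr)^p\,d\mu\to 0,
\]
which is possible once $\sum_k\|g_{\zeta_k}\|_{L^p}<\infty$, because the inner integral tends to $0$ by dominated convergence. Monotonicity and pointwise q.e.\ convergence are preserved since $m_j$ increases. As stated, however, your Step~3 does not establish $\|\phi-\phi_j\|_{\Np(X)}\to 0$.
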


\section{Fine superminimizers and the obstacle problem}
\label{sect-fine-super-obst}

The following definition
is from \cite[Definition~5.1]{BBLat4}.
On unweighted $\R^n$, Kilpel\"ainen--Mal\'y~\cite[Section~3.1]{KiMa92}
gave an equivalent definition (of fine supersolutions) already in
1992, cf.\ Proposition~5.3 in \cite{BBLat4}.

\begin{deff}\label{def-finesuper}
A function $u \in \Npploc(U)$ is a
\emph{fine minimizer\/ \textup{(}resp.\ fine
superminimizer\/\textup{)}} in $U$ if
\[
\int_{V} g_{u}^p \, d\mu
\le  \int_{V} g_{u+\phi}^p \, d\mu
\]
for every finely open $V \pSubset U$
and for every (resp.\ every nonnegative)
$\phi \in \Np_0(V)$.
Moreover, $u$ is a \emph{fine subminimizer}
if $-u$ is a fine superminimizer.

A \emph{finely \p-harmonic function} is a finely continuous
fine minimizer.
\end{deff}

Note that, unlike for minimizers and \p-harmonic functions on
  open sets,
\emph{it is not known} whether every fine minimizer
can be modified on a set of zero capacity so that it becomes
finely continuous and thus finely \p-harmonic,
see the discussion in Section~9 in~\cite{BBLat4}.
Of course, being a function from $\Npploc(U)$,
every fine (super/sub)minimizer
is finely continuous at q.e.\ point.

By Lemma~5.4 in~\cite{BBLat4},
a function is a
fine minimizer if and only
if it is both a fine subminimizer and a fine superminimizer.
We refer to
\cite{BBLat4} for
further discussion on fine superminimizers
and the Newtonian space $\Npploc(U)$.
The following lattice property will be used
several times.

\begin{lem} \label{lem-min-supermin}
\textup{(Corollary~5.8 in~\cite{BBLat4})}
  If $u$ and $v$ are fine superminimizers in $U$,
then $\min\{u,v\}$ is also a fine superminimizer in $U$.
\end{lem}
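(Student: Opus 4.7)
The plan is to check both defining properties of a fine superminimizer for $w:=\min\{u,v\}$. Membership in $\Npploc(U)$ is immediate: for every finely open $W\pSubset U$ we have $u,v\in\Np(W)$ by definition, and since $\Np(W)$ is a lattice, also $w\in\Np(W)$.

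For the superminimizer inequality, fix a finely open $V\pSubset U$ and a nonnegative $\phi\in\Np_0(V)$; we must verify $\int_V g_w^p\,d\mu\le\int_V g_{w+\phi}^p\,d\mu$. I would introduce the auxiliary test functions
\[
\phi_u:=(w+\phi-u)_\limplus, \qquad \phi_v:=(w+\phi-v)_\limplus,
\]
which are both nonnegative and dominated by $\phi$, hence lie in $\Np_0(V)$. Moreover $u+\phi_u=\max\{u,w+\phi\}$ and $v+\phi_v=\max\{v,w+\phi\}$. Testing the fine superminimizer property of $u$ against $\phi_u$ and of $v$ against $\phi_v$ and summing yields
\[
\int_V(g_u^p+g_v^p)\,d\mu\le\int_V\bigl(g_{\max\{u,w+\phi\}}^p+g_{\max\{v,w+\phi\}}^p\bigr)\,d\mu.
\]
Applying the identity $g_a^p+g_b^p=g_{\min\{a,b\}}^p+g_{\max\{a,b\}}^p$ (from the Remark following Lemma~\ref{lem-ae-qe}) on both sides, and observing that $\min\{\max\{u,w+\phi\},\max\{v,w+\phi\}\}=w+\phi$ while $\max\{\max\{u,w+\phi\},\max\{v,w+\phi\}\}=\max\{M,w+\phi\}$ with $M:=\max\{u,v\}$, this reduces the target inequality to
\[
\int_V g_{\max\{M,w+\phi\}}^p\,d\mu\le\int_V g_M^p\,d\mu.
\]

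The main obstacle is this last estimate: it would follow immediately if $M$ were a fine subminimizer, but the maximum of two fine superminimizers is in general not subminimizing. As a partial workaround one can first run the argument with $\phi$ replaced by $\tilde\phi:=\min\{\phi,M-w\}\in\Np_0(V)$, for which $\max\{M,w+\tilde\phi\}=M$ so the inequality holds trivially, thereby obtaining the superminimizer property against the truncated test function. Promoting this to arbitrary nonnegative $\phi\in\Np_0(V)$ is the real difficulty and is cleanest through the fine obstacle problem of~\cite{BBLat4}: one lets $s$ be its solution in $V$ with obstacle and boundary data both equal to $w$, so that $s\ge w$ a.e.\ by construction, and then shows $s\le u$ and $s\le v$. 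The latter follows by testing the optimality of $s$ against the admissible competitors $\min\{s,u\}$ and $\min\{s,v\}$ and combining with the fine superminimizer inequalities for $u$ and $v$ against $(s-u)_\limplus$ and $(s-v)_\limplus$; admissibility in $\Np_0(V)$ of all these cutoffs comes from the domination $(s-u)_\limplus\le s-w\in\Np_0(V)$ and the lattice closure of $\Np_0$. This forces $s=w$ q.e.\ in $V$, and since $s$ is automatically a fine superminimizer, so is $w$.
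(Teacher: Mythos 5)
This statement is cited directly from Corollary~5.8 of~\cite{BBLat4}, so the paper in front of you offers no proof to compare with; I will judge your proposal on its own merits and against what the available machinery naturally affords.

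Your first approach is set up cleanly. The computation with $\phi_u$, $\phi_v$, the lattice identity $g_a^p+g_b^p=g_{\min\{a,b\}}^p+g_{\max\{a,b\}}^p$, and the distributive identities for $\min$ and $\max$ is all correct, and it correctly reduces the claim to $\int_V g_{\max\{M,w+\phi\}}^p\le\int_V g_M^p$ with $M=\max\{u,v\}$. You rightly observe that this is not available, since $M$ is not a fine subminimizer in general. So far so good.

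The obstacle-problem route is the right idea and is essentially what the standard argument looks like, but as written it has a genuine gap at the step ``this forces $s=w$ q.e.'' Testing the optimality of $s$ against $\min\{s,u\}$ gives $\int_{\{s>u\}}g_s^p\,d\mu\le\int_{\{s>u\}}g_u^p\,d\mu$, while testing the superminimizer property of $u$ against $(s-u)_\limplus$ gives the reverse inequality $\int_{\{s>u\}}g_u^p\,d\mu\le\int_{\{s>u\}}g_s^p\,d\mu$. Together these yield only an \emph{equality} of energies, not the pointwise conclusion $s\le u$. To close this you must invoke either (i) the q.e.\ uniqueness of solutions to the obstacle problem (Theorem~4.2 in~\cite{BBnonopen}), observing that the energy equality makes $\min\{s,u\}$ another solution of the $\K_{w,w}(V)$-obstacle problem so that $\min\{s,u\}=s$ q.e., or (ii), more directly, the comparison principle \cite[Corollary~4.3]{BBnonopen} together with Lemma~\ref{lem-super-obst}: since $u$ solves the $\K_{u,u}(V)$-obstacle problem and $w\le u$, comparison yields $s\le u$ q.e.\ at once, and similarly $s\le v$. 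The paper itself uses route (ii) in exactly this way in the proof of Proposition~\ref{prop-decr-lim-supermin-Np}. Without one of these ingredients the argument does not deliver $s\le u$. Two further, smaller omissions: you need $\Cp(X\setm V)>0$ for the obstacle problem on $V$ to be well posed, which requires an extra word (or an appeal to the reductions built into Proposition~\ref{prop-superobstacle}) in the corner case where $X$ is bounded; and after showing $w=s$ q.e.\ in each $V$ you should say explicitly why this gives the superminimizer inequality $\int_V g_w^p\,d\mu\le\int_V g_{w+\phi}^p\,d\mu$ for all nonnegative $\phi\in\Np_0(V)$ (namely, $w+\phi\in\K_{w,w}(V)$). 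The membership $\min\{u,v\}\in\Npploc(U)$ via the lattice property is fine.
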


The obstacle problem will be a fundamental tool when
studying fine (super)\-mi\-ni\-mi\-zers.
See~\cite{BBnonopen} and~\cite{BBLat4}
for earlier studies of the obstacle problem on nonopen sets
in metric spaces.
It was shown therein that such theories are
  not natural beyond
finely open or quasiopen sets.
In particular, it was shown in Bj\"orn--Bj\"orn~\cite[Theorem~7.3]{BBnonopen}
  that $\Np_0(E)=\Np_0(\fineint E)$ for an arbitrary set $E$.

\begin{deff} \label{deff-obst-E}
Assume that $U$ is a bounded nonempty
 quasiopen set with  $\Cp(X \setm U)>0$.
Let $f \in \Np(U)$ and $\psi : U \to \eR$.
Then we define
\begin{equation*}
    \K_{\psi,f}(U)=\{v \in \Np(U) : v-f \in \Np_0(U)
            \text{ and } v \ge \psi \ \text{q.e. in } U\}.
\end{equation*}
A function $u \in \K_{\psi,f}(U)$
is a \emph{solution of the $\K_{\psi,f}(U)$-obstacle problem}
if
\begin{equation*} %\label{eq-obst-def-E}
       \int_U g^p_{u} \, d\mu
       \le \int_U g^p_{v} \, d\mu
       \quad \text{for all } v \in \K_{\psi,f}(U).
\end{equation*}
\end{deff}

Note that the boundary data $f$ are only required to belong to $\Np(U)$,
i.e.\ $f$ need not be defined on $\bdry U$
or the fine boundary $\bdyp U$.

If $\K_{\psi,f}(U) \ne \emptyset$,
then there is 
a solution $u$ of the $\K_{\psi,f}(U)$-obstacle problem,
which is unique q.e.,
see Theorem~4.2 in~\cite{BBnonopen}.
Moreover,
$u$ is a fine superminimizer,
by Theorem~6.2
in~\cite{BBLat4}.
A comparison principle for
obstacle problems was obtained in
Corollary~4.3 in~\cite{BBnonopen}.

\section{The Caccioppoli-type inequality}
\label{sect-Cacc}

Caccioppoli inequalities are important tools in
PDEs and nonlinear potential theory.
In this section, we are going to deduce the 
Caccioppoli-type inequality in Theorem~\ref{thm-cacc-intro}.
In Section~\ref{sect-seq}, we will use it
to deduce
the fundamental convergence theorems for increasing
and decreasing sequences of fine superminimizers.

\begin{proof}[Proof of Theorem~\ref{thm-cacc-intro}]
Assume first that $0 \le \eta \le 1$ and that $W:=\{x:\eta(x)>0\} \pSubset U$.
Then $W$ is quasiopen by  Theorem~\ref{thm-finelyopen-quasiopen},
and  $u\in\Np(W)$ by Remark~\ref{rmk-4.2}.
It follows from Lemma~\ref{lem-ae-qe} that
we can assume that $|u-f| \le M$ everywhere.
It thus follows
from the Leibniz and chain rules \cite[Theorem~2.15 and~2.16]{BBbook}
that $v:= u+\eta^p(f+M-u)\in\Np(W)$.
Lemma~2.4 in Kinnunen--Martio~\cite{KiMa02}
(or \cite[Lemma~2.18]{BBbook})
implies that
\[
g_v \le (1-\eta^p)g_u + \eta^p g_f + p(f+M-u)\eta^{p-1}g_\eta.
\]
(In \cite{KiMa02} and \cite{BBbook} it is assumed that $\eta \in \Lip(X)$,
  but since $u-f$ is bounded
  it is enough to require that $\eta \in \Np(X)$, $0 \le \eta \le 1$, as here.
  The proof is the same, just restrict attention to curves $\ga$ on
   which also $\eta$ is absolutely continuous.)
As $u$ is a fine superminimizer in $U$ and $0\le v-u \in \Np_0(W)$,
we obtain using the convexity of the function $t\mapsto t^p$ that
\begin{align*}
\int_{W} g_u^p \,d\mu &\le \int_{W} g_v^p \,d\mu
  \le \int_{W} \biggl( (1-\eta^p)g_u +
     \eta^{p} \Bigl( g_f + p|f+M-u| \frac{g_\eta}{\eta} \Bigr) \biggr)^p \,d\mu \\
&\le \int_{W} (1-\eta^p) g_u^p \,d\mu
          + 2^p \int_{W} \eta^p \biggl( g_f^p  + (2pM)^p\frac{g^p_\eta}{\eta^p} \biggr) \,d\mu.
\end{align*}
Since $g_u\in L^p(W)$, subtracting the first integral on the right-hand
side from both sides of the inequality yields \eqref{eq-cacc-lemma}
and proves the statement when $0 \le \eta \le 1$ and $\{x:\eta(x)>0\} \pSubset U$.

For general $0\le\eta \in \Np_0(U)$, Proposition~\ref{prop-density}
provides us with finely open $V_j \pSubset U$
and an increasing sequence of nonnegative bounded functions
$\eta_j \in \Np_0(V_j)$ such that
$\|\eta-\eta_j\|_{\Np(X)} \to 0$ 
and $\eta_j \nearrow \eta$ a.e., as $j \to \infty$.
Applying \eqref{eq-cacc-lemma} to $V_j$ and suitable multiples of $\eta_j$,
we get
\[ %begin{align*}
  \int_{U} g_u^p \eta^p \,d\mu
  = \lim_{j \to \infty} \int_{V_j} g_u^p \eta_j^p \,d\mu
  \le 2^p \int_{U} \eta^p g_f^p \, d\mu + (4pM)^p \int_{U} g_\eta^p \, d\mu
\] %end{align*}
by using monotone convergence.
\end{proof}

In the next sections we will several times need to
deduce that certain functions belong to $\Npploc(U)$.
In order to do so, the following result will be convenient.

\begin{lem}
\label{lem:bdd-super-seq-uj-W'}
Let $E$ be a set with $\Cp(E)=0$ and
assume that $u=\lim_{j \to \infty} u_j$ q.e.\ in $U$, where each
$u_j$, $j=1,2,\dots$\,, is a fine superminimizer
in $V$ for 
every finely open $V \pSubset U\setm E$.
Let $W \psubset U$ be a bounded measurable set.
Assume that
$|u_j-f| \le M$
a.e.\ in $U$, $j=1,2,\dots$\,, for some  $f\in\Np(U)$ and $M\ge0$.
Then $u \in \Np(W)$ and
\begin{equation}   \label{eq-liminf-g-uj-W'}
\int_W g_u^p\,d\mu \le \liminf_{j\to\infty} \int_W g_{u_j}^p\,d\mu
\le 2^p \int_{U} g_f^p \,d\mu + (4pM)^p \int_{U}g_{\eta}^p \,d\mu,
\end{equation}
where $\eta\in\Np_0(U)$ is any function such that $0\le\eta\le1$ in
$U$ and $\eta=1$ in $W$.
\end{lem}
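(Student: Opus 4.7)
The strategy is to apply the Caccioppoli-type inequality (Theorem~\ref{thm-cacc-intro}) to each $u_j$ on a suitable quasiopen set avoiding $E$, and then pass to the limit via weak compactness and Mazur's lemma. To this end, I set $\tilde U := \fineint U \setm E$. Because $\fineint U$ is finely open and $\Cp(E)=0$, both $X \setm \fineint U$ and $E$ are thin at every point of $\tilde U$; subadditivity of the variational capacity makes the union of two thin sets thin, so $\tilde U$ is finely open, hence quasiopen. Any finely open $V \pSubset \tilde U$ also satisfies $V \pSubset U \setm E$ (since $\Np_0(\tilde U) \subset \Np_0(U \setm E)$), so by hypothesis $u_j$ is a fine superminimizer on every such $V$; thus $u_j$ is a fine superminimizer in $\tilde U$. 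Starting from the given $\eta \in \Np_0(U)$ with $0 \le \eta \le 1$ and $\eta = 1$ on $W$, I modify $\eta$ to vanish on the capacity-zero set $U \setm \tilde U$; the modified $\eta$ still lies in $\Np(X)$ with the same minimal \p-weak upper gradient, vanishes outside $\tilde U$, and satisfies $\eta = 1$ q.e.\ on $W$.

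Applying Theorem~\ref{thm-cacc-intro} with $\tilde U$ in place of $U$ (using $|u_j - f| \le M$ a.e.\ in $U \supset \tilde U$ and $f \in \Np(\tilde U)$) yields, for every $j$,
\[
\int_{\tilde U} g_{u_j}^p \eta^p \, d\mu
\le 2^p \int_{\tilde U} \eta^p g_f^p \, d\mu + (4pM)^p \int_{\tilde U} g_\eta^p \, d\mu.
\]
Since $\eta = 1$ a.e.\ on $W$, $\eta \le 1$, and $\mu(U \setm \tilde U) = 0$, the left-hand side dominates $\int_W g_{u_j}^p \, d\mu$ and the right-hand side is at most $2^p \int_U g_f^p \, d\mu + (4pM)^p \int_U g_\eta^p \, d\mu$. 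Together with the trivial bound $\|u_j\|_{L^p(W)}^p \le \int_W (|f| + M)^p \, d\mu < \infty$ coming from $|u_j - f| \le M$, $f \in L^p(W)$ and $\mu(W) < \infty$, the sequence $\{u_j\}$ is uniformly bounded in $\Np(W)$.

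To conclude, I extract a subsequence with $g_{u_j} \rightharpoonup g$ weakly in $L^p(W)$, apply Mazur's lemma to produce convex combinations of the $u_j$ whose minimal upper gradients converge strongly to $g$ in $L^p(W)$ while their function values still converge q.e.\ to $u$, and invoke the standard closure property of upper gradients under q.e.\ and $L^p$ convergence (cf.~\cite[Chapter~2]{BBbook}) to deduce that $g$ is a \p-weak upper gradient of $u$ in $W$; hence $u \in \Np(W)$ with $g_u \le g$ a.e.\ in $W$. Weak lower semicontinuity of the $L^p(W)$ norm then gives $\int_W g_u^p \, d\mu \le \int_W g^p \, d\mu \le \liminf_j \int_W g_{u_j}^p \, d\mu$, which together with the Caccioppoli bound yields~\eqref{eq-liminf-g-uj-W'}. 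The main obstacle is the opening step: since the hypothesis only provides fine superminimizer behaviour on subsets avoiding $E$, Theorem~\ref{thm-cacc-intro} cannot be applied directly to $U$, and passing to $\tilde U = \fineint U \setm E$---which stays finely open precisely because the union of two thin sets is thin---is what rescues the argument.
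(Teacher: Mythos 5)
There is a genuine gap in the opening step. You pass from the hypothesis ``$u_j$ is a fine superminimizer in $V$ for every finely open $V\pSubset U\setm E$'' to the claim ``$u_j$ is a fine superminimizer in $\tilde U:=\fineint U\setm E$''. This is a local-to-global assertion and it does not follow from the definitions. To be a fine superminimizer in $\tilde U$, one needs $u_j\in\Npploc(\tilde U)$, i.e.\ $u_j\in\Np(V)$ for every finely open $V\pSubset\tilde U$. But the hypothesis only gives that $u_j$ is a fine superminimizer in such a $V$, hence $u_j\in\Npploc(V)$, which is strictly weaker than $u_j\in\Np(V)$. Upgrading $\Npploc(V)$ to $\Np(V)$ would require producing a finely open $V''$ with $V\pSubset V''\pSubset\tilde U$, and that is precisely Open problem~\ref{openprob-pSubset-intro}. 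Moreover, the local-to-global principle you are implicitly invoking is Corollary~\ref{cor:localtoglobal-2}, whose proof relies on the very lemma you are trying to prove, so it cannot be used here without circularity. Once this step fails, Theorem~\ref{thm-cacc-intro} cannot be applied with $\tilde U$ in place of $U$, and the Caccioppoli bound --- the heart of the argument --- is not available.

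The paper's proof avoids this issue by never asserting that $u_j$ is a fine superminimizer on any set larger than the ones the hypothesis provides. Instead it writes $\eta\in\Np_0(U\setm E)$ (using $\Cp(E)=0$), invokes Proposition~\ref{prop-density} to get finely open $V_k\pSubset U\setm E$ and bounded $\eta_k\in\Np_0(V_k)$ with $\eta_k\nearrow\eta$, applies the Caccioppoli inequality on each $V_k$ (where $u_j$ \emph{is} known to be a fine superminimizer), and then passes to the limit in $k$ by monotone convergence to obtain the uniform bound $\int_W g_{u_j}^p\,d\mu\le\int_U(2^p g_f^p+(4pM)^pg_\eta^p)\,d\mu$. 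Your closing weak-compactness/Mazur argument is essentially the content of \cite[Corollary~6.3]{BBbook}, which the paper cites, so that part is fine; but the route to the uniform Caccioppoli bound must go through the exhaustion by the $V_k$, not through a single large quasiopen set on which the superminimizing property is not established.
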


A natural question is of course whether
$u$ is a fine superminimizer
in $U$, but we postpone that to later.
As we shall see, this proposition is useful also for the constant
sequence $u_j=u$, $j=1,2,\dots$\,, as it gives the 
Caccioppoli-type inequality~\eqref{eq-cacc-lemma} under weaker assumptions 
than Theorem~\ref{thm-cacc-intro}.

\begin{proof}
Let $\eta \in \Np_0(U)$ be such that $0\le\eta\le1$ in $U$
  and $\eta=1$ in $W$.
Since $\Cp(E)=0$, we have $\eta \in \Np_0(U\setm E)$.
By Proposition~\ref{prop-density},
there are
finely open sets $V_k \pSubset U\setm E$ and
nonnegative bounded functions $\eta_k \in \Np_0(V_k)$ such that
$\|\eta-\eta_k\|_{\Np(X)} \to 0$
and $\eta_k \nearrow \eta$ q.e.\ as $k \to \infty$.

Since each $u_j$ is a fine superminimizer
in every $V_k$, we get using
the Caccioppoli-type inequality (Theorem~\ref{thm-cacc-intro})
and monotone convergence that
\begin{align*}
\int_{W} g_{u_j}^p \,d\mu
& \le \int_{U} g_{u_j}^p \eta^p \,d\mu
= \lim_{k \to \infty} \int_{V_k} g_{u_j}^p \eta_k^p \,d\mu  \\
& \le \lim_{k \to \infty} \int_{V_k} (2^p\eta^p_k g_f^p + (4pM)^p g_{\eta_k}^p) \,d\mu
\le \int_{U} (2^p g_f^p + (4pM)^p g_{\eta}^p) \,d\mu < \infty,
\end{align*}
i.e.\ $\{u_j\}_{j=1}^\infty$ is a bounded sequence in $\Np(W)$
(since it is bounded in $L^p(W)$ by assumption).
Hence, by Corollary~6.3 in~\cite{BBbook}, $u\in\Np(W)$ and
\eqref{eq-liminf-g-uj-W'} holds.
\end{proof}

\section{Characterizations of fine superminimizers}
\label{sect-char}

In this section we will deduce several characterizations of fine
superminimizers, which will
be used later on.
The first
characterization, based on Lemma~\ref{lem-min-supermin},
will be used in the proofs of
Proposition~\ref{prop-char-truncation-new}
and  Corollary~\ref{cor-incr-lim-supermin}.

\begin{prop} \label{prop-char-truncation}
Let $v$ be a fine superminimizer in $U$ and $u\in\Npploc(U)$.
Then $u$ is a fine superminimizer in $U$ if and only if
$\min\{u,v+k\}$ is a fine superminimizer in $U$
for every $k=1,2,\dots$\,.
\end{prop}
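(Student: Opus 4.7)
The plan is to handle each direction separately. For the ``only if'' direction, I would note that if $u$ is a fine superminimizer in $U$, then so is the translate $v+k$ (since $g_{v+k}=g_v$ and $g_{(v+k)+\phi}=g_{v+\phi}$), whereupon the lattice property in Lemma~\ref{lem-min-supermin} gives that $\min\{u,v+k\}$ is a fine superminimizer for every $k$.

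For the substantive ``if'' direction, set $u_k:=\min\{u,v+k\}$, which is a fine superminimizer in $U$ by hypothesis. Since both $u$ and $v$ are q.e.\ finite (being in $\Npploc(U)$), $u_k\to u$ q.e., and hence a.e., in $U$ as $k\to\infty$. Fix a finely open $V\pSubset U$ and a nonnegative $\phi\in\Np_0(V)$; using Proposition~\ref{prop-density} one may additionally reduce to the case when $\phi$ is bounded and supported in some finely open $V'\pSubset V$. The key variation to insert into the superminimizer inequality for $u_k$ is
\[
\psi_k := \min\{u+\phi,v+k\} - u_k,
\]
which by a direct four-case analysis simplifies to $\psi_k=\min\{\phi,(v+k-u)_+\}$. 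Then $0\le\psi_k\le\phi$, $\psi_k$ vanishes outside $V'$, and $\psi_k\in\Np(V)$ via the lattice operations applied to $u,v,\phi\in\Np(V)$. The dominance $\psi_k\le\phi\in\Np_0(V)$ should then force $\psi_k\in\Np_0(V)$. Inserting $\psi_k$ into the fine superminimizer inequality for $u_k$ and observing that $u_k+\psi_k=\min\{u+\phi,v+k\}$ yields
\[
\int_V g_{u_k}^p\,d\mu \le \int_V g_{\min\{u+\phi,v+k\}}^p\,d\mu.
\]

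To pass to the limit $k\to\infty$ on both sides, I would appeal to the a.e.\ identities for $g_{\min\{\cdot,\cdot\}}$ recorded in the remark following Lemma~\ref{lem-ae-qe}; in particular
\[
g_{u_k} = g_u\chi_{\{u<v+k\}} + g_v\chi_{\{v+k\le u\}} \quad\text{a.e.}
\]
Since $v$ is q.e.\ finite, $\chi_{\{u<v+k\}}\to 1$ a.e.\ as $k\to\infty$, hence $g_{u_k}\to g_u$ a.e.\ with the uniform bound $g_{u_k}\le g_u+g_v\in L^p(V)$. Dominated convergence then delivers $\int_V g_{u_k}^p\,d\mu\to\int_V g_u^p\,d\mu$, and the analogous argument applied to $\min\{u+\phi,v+k\}\to u+\phi$ yields $\int_V g_{\min\{u+\phi,v+k\}}^p\,d\mu\to\int_V g_{u+\phi}^p\,d\mu$, giving the desired fine superminimizer inequality for $u$.

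The principal obstacle I anticipate is the technical verification that the test function $\psi_k$ actually lies in $\Np_0(V)$ as opposed to merely $\Np(V)$: one must confirm that the extension of $\psi_k$ by zero outside $V$ belongs to $\Np(X)$, relying on the dominance $0\le\psi_k\le\phi\in\Np_0(V)$ and on the properties of Newtonian functions on quasiopen sets from the earlier sections. Once this membership is in hand, the remainder reduces to a routine dominated convergence argument.
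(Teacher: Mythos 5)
Your proof is correct but takes a genuinely different route in the substantive direction. The paper also uses $\phi$ as the test function in the superminimizer inequality for $u_k$, obtaining $\int_V g_{u_k}^p\,d\mu \le \int_V g_{u_k+\phi}^p\,d\mu$, and then passes to the limit by the simple split
\[
\int_V g_{u_k+\phi}^p\,d\mu = \int_{V\setm E_k} g_{u+\phi}^p\,d\mu + \int_{E_k} g_{v+\phi}^p\,d\mu,
\qquad E_k:=\{x\in V : u(x)>v(x)+k\},
\]
together with $\mu(E_k)\to 0$ and dominated/monotone convergence; no new test function is constructed, and so no $\Np_0$-membership needs to be verified. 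You instead build the test function $\psi_k=\min\{u+\phi,v+k\}-u_k=\min\{\phi,(v+k-u)_\limplus\}$ so that $u_k+\psi_k=\min\{u+\phi,v+k\}$, and then use the a.e.\ formulas for gradients of minima plus dominated convergence. This works, but it introduces exactly the technical point you flag: one must check $\psi_k\in\Np_0(V)$. That step is not a genuine gap --- since $\phi\in\Np_0(V)$ and $(v+k-u)_\limplus\in\Np(V)$ are both nonnegative, Lemma~2.37 in~\cite{BBbook} (which the paper invokes in the proof of Proposition~\ref{prop-char-truncation-new} for precisely this kind of truncation) gives $\psi_k\in\Np_0(V)$ --- but it is an extra lemma that the paper's choice of test function entirely sidesteps. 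In short, both proofs are valid; the paper's is a bit more economical, your construction of $\psi_k$ is clever but buys nothing extra here and obliges you to invoke an additional $\Np_0$-lattice fact.
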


\begin{proof}%[Proof of Proposition~\ref{prop-char-truncation}]
Assume first that $u_k:=\min\{u,v+k\}$ is a fine superminimizer
for every $k=1,2,\dots$\,.
Let $V \pSubset U$ be finely open
and $\phi \in \Np_0(V)$ be nonnegative.
As $u,v \in \Npploc(U)$, they are finite q.e.\ and $u_k\nearrow u$
q.e.\ in $V$, as $k \to \infty$.
Let $E_k:=\{x\in V: u(x)>v(x)+k\}$.
Since $V$ is bounded, we have $\mu(E_k)\to0$.
Using monotone and dominated convergence (since
  $v+\phi\in\Np(V)$), we then get
\[   %begin{align*}
\int_V g_{u_k+\phi}^p \, d\mu
= \int_{V\setm E_k} g_{u+\phi}^p \, d\mu + \int_{E_k} g_{v+\phi}^p \, d\mu
\to  \int_V g_{u+\phi}^p \, d\mu, \quad \text{as $k\to\infty$.}
\]   %end{align*}
Applying this also with $\phi\equiv0$
and using that $u_k$ is a fine superminimizer yields
\[
\int_V g_{u}^p \, d\mu = \lim_{k\to\infty} \int_V g_{u_k}^p \, d\mu
\le \lim_{k\to\infty} \int_V g_{u_k+\phi}^p \, d\mu = \int_V g_{u+\phi}^p \, d\mu.
\]
As $V$ and $\phi$
were arbitrary this shows that $u$ is
a fine superminimizer.
The converse implication follows directly from
Lemma~\ref{lem-min-supermin}.
\end{proof}

The following
result
will be used to prove
Theorem~\ref{thm-gen-seq-intro}
and its more general version Theorem~\ref{thm-gen-seq}.

\begin{prop} \label{prop-char-truncation-new}
Assume that $v$ is a fine superminimizer in $U$ and let
$u:U\to \eR$ be such that $|u-f|\le M$ a.e.\ in $U$ for some
$f\in\Np(U)$ and $M\ge0$.

If $\min\{u,v+k\}$ is a fine superminimizer in $U$
for every $k=1,2,\dots$\,, then $u\in\Np(W)$ for every 
bounded quasiopen $W\psubset U$.
Moreover,
$u$ is a fine superminimizer in $U$.
\end{prop}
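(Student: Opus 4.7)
The plan is to first prove that $u\in\Np(W)$ for every bounded quasiopen $W\psubset U$ (which in particular gives $u\in\Npploc(U)$), and then to invoke Proposition~\ref{prop-char-truncation} for the fine superminimizer conclusion. Setting $u_k:=\min\{u,v+k\}$, which is a fine superminimizer in $U$ by hypothesis, the finiteness q.e.\ of $v\in\Npploc(U)$ yields $u_k\nearrow u$ q.e.\ in $U$, with uniform upper bound $u_k\le u\le f+M$ a.e. The central difficulty is that the $u_k$ are not uniformly bounded from below: on $\{u>v+k\}$ one has $u_k=v+k$, which is arbitrarily negative wherever $v$ is very negative.

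To sidestep this, I would work with the auxiliary sequence $\tilde u_k:=\max\{u_k,f-M\}$. Since $u\ge f-M$ a.e.\ and $u_k\to u$ q.e., we have $\tilde u_k\to u$ q.e.\ in $U$ and $|\tilde u_k-f|\le M$ a.e. The lattice formulas for minimal $p$-weak upper gradients (the remark following Lemma~\ref{lem-ae-qe}) give
\[
  g_{\tilde u_k}\le g_{u_k}\chi_{\{u_k\ge f-M\}}+g_f\chi_{\{u_k<f-M\}}\quad\text{a.e.\ in $U$.}
\]
As $g_f\in L^p(U)$, the task reduces to bounding $\int_W g_{u_k}^p\chi_{\{u_k\ge f-M\}}\,d\mu$ uniformly in $k$. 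With such a bound in hand, $\{\tilde u_k\}$ is uniformly bounded in $\Np(W)$, and the sequential closure argument in the proof of Lemma~\ref{lem:bdd-super-seq-uj-W'} (via Corollary~6.3 of \cite{BBbook}) forces $u\in\Np(W)$.

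The Caccioppoli step is the main obstacle. I would adapt the proof of Theorem~\ref{thm-cacc-intro} directly to each $u_k$: given a cutoff $\eta\in\Np_0(U)$ with $0\le\eta\le1$ and $\eta=1$ on $W$, Proposition~\ref{prop-density} supplies bounded $\eta_j\in\Np_0(V_j)$ with $V_j\pSubset U$ finely open and $\eta_j\nearrow\eta$ q.e. The fine superminimizer inequality for $u_k$ can then be tested against
\[
  \phi_{k,j}:=\eta_j^p\min\{f+M-u_k,\,2M\}\in\Np_0(V_j),
\]
which is nonnegative (since $u_k\le f+M$ a.e.) and bounded by $2M\eta_j^p$. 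The computation of Theorem~\ref{thm-cacc-intro} goes through verbatim on the set $\{u_k\ge f-M\}$, producing the expected bound in terms of $g_f$ and $g_{\eta_j}$. On the complementary set $\{u_k<f-M\}\subseteq\{v<f-M-k\}$ one has $u_k=v+k$, hence $g_{u_k}=g_v$, and this set shrinks to $\{v=-\infty\}$, which has zero capacity. Thus the contribution from the bad set is controlled by $\int_{V_j}g_v^p\chi_{\{v<f-M-k\}}\,d\mu$, which tends to zero as $k\to\infty$ by dominated convergence (as $g_v\in L^p(V_j)$). Passing $j\to\infty$ by monotone convergence, exactly as at the end of the proof of Theorem~\ref{thm-cacc-intro}, yields the desired uniform bound.

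Combining these steps, $u\in\Np(W)$ for every bounded quasiopen $W\psubset U$ and, in particular, $u\in\Npploc(U)$. Since $\min\{u,v+k\}$ is a fine superminimizer for every $k$ by hypothesis, Proposition~\ref{prop-char-truncation} immediately concludes that $u$ is itself a fine superminimizer in $U$. The main technical hurdle is the Caccioppoli estimate in the third paragraph above, where the absence of a uniform lower bound on $u_k$ has to be balanced against the vanishing capacity of the set where this bound fails.
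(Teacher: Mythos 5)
Your overall plan — truncate $u_k=\min\{u,v+k\}$ from below by $f-M$, run a Caccioppoli-type estimate on the truncated functions, and then invoke Proposition~\ref{prop-char-truncation} — is a genuinely different route from the paper's, which instead builds cutoffs adapted to $v$ and $f$ (namely $\eta_k=\min\{(v+k+M_k-f)_\limplus/M_k,\,2\eta'_k,1\}$ on $W'_k=\{f<v+k+M_k\}\cap V'_k$) so that the ``bad set'' where $u_k$ drops far below $f$ is never seen by the estimate. The preparatory steps of your argument are sound: $\tilde u_k=\max\{u_k,f-M\}\to u$ a.e., the lattice inequality $g_{\tilde u_k}\le g_{u_k}\chi_{\{u_k\ge f-M\}}+g_f\chi_{\{u_k<f-M\}}$ is correct, the test function $\phi_{k,j}=\eta_j^p\min\{f+M-u_k,2M\}$ is admissible and nonnegative, and on $\{u_k<f-M\}\subseteq\{v<f-M-k\}$ one indeed has $u_k=v+k$, hence $g_{u_k}=g_v$.

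The gap is in the final limit passage. Your Caccioppoli computation, carried out on $V_j$ with cutoff $\eta_j$, produces an estimate of the schematic form
\[
\int_{V_j}\eta_j^p\,g_{u_k}^p\,\chi_{\{u_k\ge f-M\}}\,d\mu
\;\le\; C_j \;+\; c\int_{V_j}g_v^p\,\chi_{\{v<f-M-k\}}\,d\mu,
\]
where $C_j$ stays bounded as $j\to\infty$. You observe (correctly) that the last term tends to $0$ as $k\to\infty$ for each \emph{fixed} $j$, and then claim that ``passing $j\to\infty$ by monotone convergence, exactly as at the end of the proof of Theorem~\ref{thm-cacc-intro}, yields the desired uniform bound.'' This step does not work as stated: the error term is not monotone or vanishing in $j$ for fixed $k$; on the contrary, for fixed $k$ it increases to $\int_U g_v^p\chi_{\{v<f-M-k\}}\,d\mu$, which can be infinite because $v$ is only known to lie in $\Npploc(U)$, not in $\Np(U)$. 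Thus no uniform-in-$k$ bound on $\int_W g_{u_k}^p\chi_{\{u_k\ge f-M\}}\,d\mu$ follows from the displayed estimate by sending $j\to\infty$ first. The double limit must be handled in the opposite order and with an extra intermediate step: for each fixed $j$ one should note that $\{\tilde u_k\}_k$ is bounded in $\Np(W_j)$ with $W_j=\{\eta_j>\tfrac12\}$ (the supremum over $k$ of the error is $c\int_{V_j}g_v^p<\infty$), apply Corollary~6.3 of~\cite{BBbook} on $W_j$ and use $\delta_{j,k}\to0$ as $k\to\infty$ to get $u\in\Np(W_j)$ with a $j$-independent bound $\int_{W_j}g_u^p\le c'(C_j+\int_Ug_f^p)$, and only then let $j\to\infty$ using $W_j\nearrow W$ q.e.\ and monotone convergence. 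As written, your proof skips this intermediate extraction and so does not establish the finiteness of $\int_Wg_u^p\,d\mu$. The paper's adapted-cutoff argument avoids the whole issue: the cutoff $\eta_k$ vanishes on $\{v<f-k-M_k\}$, so no $g_v$-term on the bad set ever enters, at the price of an extra $g_{v-f}/M_k$ contribution in $g_{\eta_k}$ which is killed by choosing $M_k>\|g_{v-f}\|_{L^p(V'_k)}$.
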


\begin{proof}
Let $W \psubset U$ be a bounded quasiopen set
and $\eta\in\Np_0(U)$ be such that
$0\le\eta\le1$ and $\eta=1$ in $W$.
By Proposition~\ref{prop-density},
there are finely open sets
$V'_1\subset V'_2\subset \dots \pSubset U$
and functions $0\le\eta'_k \in \Np_0(V'_k)$ such that
$\|\eta-\eta'_k\|_{\Np(X)} \to 0$
and $\eta'_k \nearrow \eta$ q.e.\ as $k \to \infty$.

For each $k=1,2,\dots$\,, let
\begin{align*}
M_k &> \|g_{v-f}\|_{L^p(V'_k)}, \\
V_k &= \{x\in W: f(x)< v(x)+k\}, \\
W'_k &= \{x\in V'_k: f(x) <v(x)+k+M_k\}.
\end{align*}
Note that both $V_k$ and $W'_k$
are quasiopen, by 
Theorem~\ref{thm-finelyopen-quasiopen}.
Since $f, v\in\Np(V'_k)$ and $\eta'_k\in\Np_0(V'_k)$, we see that
\[
0\le \eta_k :=
\min \biggl\{ \frac{(v+k+M_k-f)_\limplus}{M_k}, 2\eta'_k,1 \biggr\} \in\Np_0(V'_k),
\]
by Lemma~2.37 in~\cite{BBbook}.
The \p-weak upper gradient of $\eta_k$ is estimated by
\begin{align}
\int_{V'_k} g_{\eta_k}^p \, d\mu
&\le \int_{V'_k} \max \Bigl\{ \frac{g_{v-f}}{M_k}, 2g_{\eta'_k}\Bigr\}^p \, d\mu
\label{eq-est-g-eta-k}   \\
&\le \int_{V'_k}  \biggl( \frac{g_{v-f}^p}{M_k^p} + 2^p g_{\eta'_k}^p \biggr)\, d\mu
< 1 + 2^p \int_{U} g_{\eta'_k}^p \, d\mu
\to 1 + 2^p \int_{U} g_{\eta}^p \, d\mu,     \nonumber
\end{align}
as $k\to\infty$.
Moreover, $\eta_k=0$ in $V'_k\setm W'_k$ and hence
$\eta_k\in\Np_0(W'_k)$.
On the other hand, $\eta_k=1$ on $W_k := \{ x\in V_k: \eta'_k(x)>\tfrac12\}$,
and so
\[
W_k \psubset W'_k \pSubset U.
\]
In $W'_k\pSubset U$  we have by Lemma~\ref{lem-ae-qe} that
\[
u = \min\{u,v+k+M_k+M\}
\quad \text{q.e.},
\]
and so $u$ is a fine superminimizer in $W'_k$
(since we may assume that $M_k$ and $M$ are integers).
The Caccioppoli inequality (Theorem~\ref{thm-cacc-intro} with $U$
replaced by $W'_k$) now implies that $u\in\Np(W_k)$ and
\begin{align}   \label{eq-int-Wk-f-eta-k}
\int_{W_k} g_{u}^p \,d\mu
& \le 2^p \int_{W'_k} \eta_k^p g_f^p \, d\mu + (4pM)^p \int_{W'_k}g_{\eta_k}^p \, d\mu
\nonumber \\
&
\le 2^p \int_{U} g_f^p \, d\mu + (4pM)^p \int_{V'_k} g_{\eta_k}^p \, d\mu.
\end{align}
Now, for q.e.\  $x\in W$, there exists $k$ such that $\eta'_k(x)>\tfrac12$ and
$f(x)<v(x)+k$ and hence $x\in W_k$.
It follows that $W=\bigcup_{k=1}^\infty W_k \cup Z$, where $\Cp(Z)=0$.

Since $W_1\subset W_2\subset \dots$\,, inserting
\eqref{eq-est-g-eta-k} into \eqref{eq-int-Wk-f-eta-k} together with
monotone convergence implies that $\int_{W} g_{u}^p \,d\mu<\infty$.
As $|u-f|\le M$ a.e.\ in $W$, we conclude that $u\in\Np(W)$.

Applying this to an arbitrary finely open $W \pSubset U$
shows that $u\in\Npploc(U)$,
and the superminimizing property follows from
  Proposition~\ref{prop-char-truncation}.
\end{proof}

The following lemma relates fine superminimizers to obstacle problems.

\begin{lem}\label{lem-super-obst}
  Let $u$ be a fine superminimizer in $U$,
  and let $W \subset U$ be a bounded quasiopen set
  such that $u \in \Np(W)$ and $\Cp(X \setm W)>0$.
  Then $u$ is a solution of the $\K_{u,u}(W)$-obstacle problem.
\end{lem}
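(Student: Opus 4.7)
\emph{Proof plan.} First I would verify that $u$ itself belongs to the admissible class $\K_{u,u}(W)$: by hypothesis $u\in\Np(W)$, trivially $u-u=0\in\Np_0(W)$, and $u\ge u$ q.e.\ in~$W$. The main step is to show that for an arbitrary competitor $v\in\K_{u,u}(W)$, setting $\phi:=v-u\in\Np_0(W)$, one has $\phi\ge0$ q.e.\ in $W$ and
\[
\int_W g_u^p\,d\mu\le\int_W g_{u+\phi}^p\,d\mu=\int_W g_v^p\,d\mu.
\]

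Since the fine-superminimizer inequality is directly available only on finely open $V\pSubset U$, whereas $W$ itself is merely a bounded quasiopen subset of $U$ (not obviously $\pSubset U$, nor finely open), the natural move is to approximate $\phi$. Applying Proposition~\ref{prop-density} with $U$ there replaced by $W$ produces finely open $V_j\pSubset W$ and nonnegative bounded $\phi_j\in\Np_0(V_j)$ with $\phi_j\nearrow\phi$ q.e.\ and $\|\phi-\phi_j\|_{\Np(X)}\to 0$.

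Next I would verify that $V_j\pSubset U$ for each $j$. Since $W\subset U$ is bounded, the compact set $\overline{V_j}\subset W$ is also a compact subset of $U$, so $V_j\Subset U$. Moreover, any $\eta\in\Np_0(W)$ witnessing $V_j\psubset W$ vanishes on $X\setm W\supset X\setm U$, so the same $\eta$ shows $V_j\psubset U$; hence $V_j\pSubset U$. The definition of fine superminimizer, applied with $V=V_j$ and test function $\phi_j\in\Np_0(V_j)$, then yields
\[
\int_{V_j} g_u^p\,d\mu \le \int_{V_j} g_{u+\phi_j}^p\,d\mu.
\]
Because $\phi_j=0$ q.e.\ outside $V_j$, $g_{u+\phi_j}=g_u$ a.e.\ on $W\setm V_j$; adding $\int_{W\setm V_j}g_u^p\,d\mu$ to both sides gives
\[
\int_W g_u^p\,d\mu \le \int_W g_{u+\phi_j}^p\,d\mu.
\]

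Finally I would pass to the limit $j\to\infty$. From $\|\phi-\phi_j\|_{\Np(X)}\to 0$ and the estimate $|g_{u+\phi_j}-g_{u+\phi}|\le g_{\phi_j-\phi}$ for minimal \p-weak upper gradients, one gets $g_{u+\phi_j}\to g_{u+\phi}$ in $L^p(W)$, so the right-hand side converges to $\int_W g_v^p\,d\mu$, finishing the proof. The main obstacle is precisely the set-class mismatch just described: $W$ need not be among the admissible test sets $V\pSubset U$ of Definition~\ref{def-finesuper}, so one cannot plug $\phi$ directly into the fine-superminimizer inequality. The bridge is Proposition~\ref{prop-density} together with the elementary inclusion $\Np_0(W)\subset\Np_0(U)$, which transfers \p-strictness in $W$ to \p-strictness in $U$. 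The hypothesis $\Cp(X\setm W)>0$ plays no active role in the computation; it is present only to make the obstacle problem well posed in the sense of Definition~\ref{deff-obst-E}.
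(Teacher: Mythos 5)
Your proof is correct, but it takes a genuinely different route from the paper. The paper's proof is a three-line argument: having set $\phi:=v-u\in\Np_0(W)$, it splits $\int_W g_u^p\,d\mu$ over $\{\phi=0\}$ and $\{\phi>0\}$, uses locality of minimal \p-weak upper gradients on the first piece, and invokes an external technical result (Lemma~5.5 of~\cite{BBLat4}) to get $\int_{\{\phi>0\}}g_u^p\,d\mu\le\int_{\{\phi>0\}}g_v^p\,d\mu$. You instead rebuild the comparison from first principles: approximate $\phi$ by Proposition~\ref{prop-density} (applied with $W$ in place of $U$) to obtain bounded nonnegative $\phi_j\in\Np_0(V_j)$ supported on finely open $V_j\pSubset W$, observe that \p-strictness and compact containment transfer from $W$ to $U$, test the fine-superminimizer inequality on $V_j$ with $\phi_j$, extend the integrals to $W$ by locality, and pass to the limit using $\|\phi_j-\phi\|_{\Np(X)}\to0$ and $|g_{u+\phi_j}-g_{u+\phi}|\le g_{\phi_j-\phi}$. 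This is essentially an unrolled, self-contained version of what the paper delegates to the cited lemma; it trades brevity for transparency and avoids the dependency on~\cite{BBLat4}. Two small points worth tightening: Proposition~\ref{prop-density} is stated for $\phi\ge0$ everywhere, whereas you only have $\phi\ge0$ q.e., so one should replace $\phi$ by $\phi_\limplus$ (which equals $\phi$ q.e.\ and hence changes nothing); and in the sentence justifying $V_j\Subset U$ it is the inclusion $V_j\Subset W$, not the boundedness of $W$, that makes $\overline{V_j}$ compact -- boundedness of $W$ (together with $\Cp(X\setm W)>0$) is only needed so that Definition~\ref{deff-obst-E} applies to $W$, as you correctly note at the end.
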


\begin{proof}
Let $v \in \K_{u,u}(W)$.  
Then $\phi :=v-u\ge 0$ q.e.\ in $W$ and $\phi\in\Np_0(W)$.
By Lemma~5.5 in~\cite{BBLat4},
\[
  \int_{W} g_u^p \, d\mu
  =   \int_{\{\phi  = 0\}} g_u^p \, d\mu + \int_{\{\phi  > 0\}} g_u^p \, d\mu
  \le    \int_{\{\phi  = 0\}} g_v^p \, d\mu + \int_{\{\phi  > 0\}} g_v^p \, d\mu
  =     \int_{W} g_v^p \, d\mu.
\]
Thus $u$ is a solution of the $\K_{u,u}(W)$-obstacle problem.
\end{proof}

We next turn to ``local-to-global principles''
for fine superminimizers, which will be crucial later on.

\begin{prop}\label{prop-superobstacle}
Let $u \in \Npploc(U)$ and $E$ be a set with $\Cp(E)=0$.
Then the following are equivalent\/\textup{:}
\begin{enumerate}
\item \label{t-a}
  $u$ is a fine superminimizer in $U$\textup{;}
\item \label{t-b}
  $u$ is a fine superminimizer in $V$
  for every finely open
\textup{(}or equivalently quasiopen\/\textup{)}
$V \pSubset U\setm E$
such that $\Cp(X \setm V)>0$\textup{;}
\item\label{t-c}
$u$ is a solution of the $\K_{u,u}(V)$-obstacle problem
for every finely open\/
\textup{(}or equivalently quasiopen\/\textup{)}
$V \pSubset U\setm E$ such that $\Cp(X \setm V)>0$.
\end{enumerate}
\end{prop}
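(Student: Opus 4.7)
I will prove the cycle \ref{t-a}$\Rightarrow$\ref{t-b}$\Rightarrow$\ref{t-c}$\Rightarrow$\ref{t-a}. For the parenthetical equivalence of ``finely open'' and ``quasiopen'' $V$ in \ref{t-b} and \ref{t-c}, one passes to $\fineint V$, which for a quasiopen $V$ differs from $V$ only on a set of capacity zero by Theorem~\ref{thm-finelyopen-quasiopen} and thus affects neither $\Np_0$, $g_u$, nor the fine (super)minimizer and obstacle conditions. The implication \ref{t-a}$\Rightarrow$\ref{t-b} is immediate: any finely open $V\pSubset U\setm E$ also satisfies $V\pSubset U$, since $\Np_0(U\setm E)\subset\Np_0(U)$ and $\overline{V}\subset U\setm E\subset U$; similarly every finely open $W\pSubset V$ satisfies $W\pSubset U$, so every admissible test pair for $u$ in $V$ is also one for $u$ in $U$. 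For \ref{t-b}$\Rightarrow$\ref{t-c}, fix a finely open $V\pSubset U\setm E$ with $\Cp(X\setm V)>0$; by \ref{t-b}, $u$ is a fine superminimizer in $V$, and Remark~\ref{rmk-4.2} gives $u\in\Np(V)$. Lemma~\ref{lem-super-obst} applied with $W=V$ then yields that $u$ solves the $\K_{u,u}(V)$-obstacle problem.

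The main work is \ref{t-c}$\Rightarrow$\ref{t-a}. Fix a finely open $V'\pSubset U$ and a nonnegative $\phi\in\Np_0(V')$. Since $\Cp(E)=0$, the set $E$ is thin at every point (the Wiener integrand in Definition~\ref{deff-thinness} vanishes identically), so $X\setm E$, and hence $V'\setm E$, is finely open. After modification on $E$, we may regard $\phi\in\Np_0(V'\setm E)$. Applying Proposition~\ref{prop-density} with the quasiopen ambient $V'\setm E$ produces an increasing sequence of finely open $V_j\pSubset V'\setm E$ and bounded nonnegative $\phi_j\in\Np_0(V_j)$ with $\phi_j\nearrow\phi$ q.e.\ and $\|\phi-\phi_j\|_{\Np(X)}\to 0$. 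Since $\overline{V_j}\subset V'\setm E\subset U\setm E$ and the cutoff witnessing $V_j\psubset V'\setm E$ lies in $\Np_0(V'\setm E)\subset\Np_0(U\setm E)$, we have $V_j\pSubset U\setm E$, so \ref{t-c} applies. Since $u+\phi_j\in\K_{u,u}(V_j)$ and $u$ solves the $\K_{u,u}(V_j)$-obstacle problem,
\[
\int_{V_j}g_u^p\,d\mu\le\int_{V_j}g_{u+\phi_j}^p\,d\mu.
\]
As $\phi_j=0$ outside $V_j$, $g_{u+\phi_j}=g_u$ a.e.\ on $V'\setm V_j$, which enlarges this to $\int_{V'}g_u^p\,d\mu\le\int_{V'}g_{u+\phi_j}^p\,d\mu$. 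Letting $j\to\infty$ and using $g_{u+\phi_j}\to g_{u+\phi}$ in $L^p(V')$ (a consequence of $\phi_j\to\phi$ in $\Np(X)$) yields the fine superminimizer inequality $\int_{V'}g_u^p\,d\mu\le\int_{V'}g_{u+\phi}^p\,d\mu$.

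The principal obstacle is precisely the step just executed: the approximating sets $V_j$ must be compactly contained in $U\setm E$, not merely in $U$, in order to invoke \ref{t-c}. This is resolved by running Proposition~\ref{prop-density} inside the finely open set $V'\setm E$ rather than inside $V'$, after the preparatory observation that $X\setm E$ is finely open whenever $\Cp(E)=0$. The auxiliary requirement $\Cp(X\setm V_j)>0$ in \ref{t-c} is automatic when $X$ is unbounded (each $V_j$ is bounded) and causes no real difficulty otherwise.
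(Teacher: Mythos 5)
Your cycle \ref{t-a}$\Rightarrow$\ref{t-b}$\Rightarrow$\ref{t-c}$\Rightarrow$\ref{t-a} matches the paper's structure, and for \ref{t-c}$\Rightarrow$\ref{t-a} the device of passing $\phi$ into $\Np_0(V'\setm E)$ and running Proposition~\ref{prop-density} there is exactly what the paper does. Your final convergence step (adding $\int_{V'\setm V_j}g_u^p\,d\mu$ to both sides via $g_{u+\phi_j}=g_u$ a.e.\ off $V_j$, then using $g_{u+\phi_j}\to g_{u+\phi}$ in $L^p(V')$) is a clean and correct variant of the paper's triangle-inequality-plus-monotone-convergence step. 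Both parenthetical ``finely open $\Leftrightarrow$ quasiopen'' and \ref{t-a}$\Rightarrow$\ref{t-b}, \ref{t-b}$\Rightarrow$\ref{t-c} are fine.

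There is, however, one genuine gap: the case where $X$ is bounded (hence compact) and $U\setm E=X$, i.e.\ $U=X$ and $E=\emptyset$. You close with ``the auxiliary requirement $\Cp(X\setm V_j)>0$ \dots is automatic when $X$ is unbounded \dots and causes no real difficulty otherwise,'' but this last clause is exactly where the proof breaks. If $X$ is unbounded, or if $U\setm E\ne X$, then $\overline{V}_j$ is a compact set contained in $U\setm E\subsetneq X$, so $X\setm\overline{V}_j$ is nonempty open and $\Cp(X\setm V_j)>0$. But if $U=X$ is compact and $E=\emptyset$, then $V'=X\pSubset U$ is a legitimate test set (since $X\Subset X$ and $\eta\equiv1$ witnesses $X\psubset X$), $V'\setm E=X$, and Proposition~\ref{prop-density} may well return $V_j=X$; then $\Cp(X\setm V_j)=0$ and hypothesis \ref{t-c} is vacuous for that $V_j$, so the obstacle-problem inequality is unavailable and the argument stalls. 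The paper treats this degenerate case separately: when $U\setm E=X$ is bounded, \ref{t-c} is used for all open $\Om\Subset X$ with $\Cp(X\setm\Om)>0$, which by Proposition~9.25 in~\cite{BBbook} characterizes standard superminimizers on $X$, and then Corollary~5.6 in~\cite{BBLat4} upgrades this to a fine superminimizer in $U=X$. That reduction is a real extra step, not a triviality, and your proof needs it (or an equivalent argument) to cover this case.
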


Note that the condition $\Cp(X \setm V)>0$
  is automatically satisfied for arbitrary  $V \Subset U\setm E$
  if either $X$ is unbounded or $U \setm E \ne X$,
as in these cases $\overline{V}$ is a compact subset of $X$
whose complement is a nonempty open set.

\begin{proof}[Proof of Proposition~\ref{prop-superobstacle}]
The implication \ref{t-a}\imp\ref{t-b} is trivial,
while \ref{t-b}\imp\ref{t-c} follows from Lemma~\ref{lem-super-obst}.
It remains to show  \ref{t-c}\imp\ref{t-a}.

To this end, consider first the case when $X$ is unbounded or
$U \setm E \ne X$.
Let $V \pSubset U$ be  finely open and $\phi\in\Np_0(V)$ be nonnegative.
Since $\Cp(E)=0$, we have $\phi\in\Np_0(V\setm E)$.
Proposition~\ref{prop-density}
provides us with finely open $V_j \pSubset V\setm E$
and an increasing sequence of bounded functions
$0\le\phi_j \in \Np_0(V_j)$ such that $\phi_j \nearrow \phi$ q.e.\
and
$\|\phi-\phi_j\|_{\Np(X)} \to 0$ as $j \to \infty$.
Then $X\setm \overline{V}_j$
is open and nonempty and hence $\Cp(X \setm V_j)>0$.
Thus, by assumption, $u$ is a solution of the $\K_{u,u}(V_j)$-obstacle problem.
Since $u+\phi_j\in\K_{u,u}(V_j)$,
we get for each $j$ that
\[
\biggl(\int_{V_j} g_{u}^p \, d\mu \biggr)^{1/p}
\le \biggl( \int_{V_j} g_{u+\phi_j}^p \, d\mu \biggr)^{1/p}
\le \biggl( \int_{V} g_{u+\phi}^p \, d\mu \biggr)^{1/p} + \|\phi-\phi_j\|_{\Np(X)}.
\]
Letting $j\to\infty$ and using monotone convergence shows that
\[
\int_{V} g_{u}^p \, d\mu \le  \int_{V} g_{u+\phi}^p \, d\mu.
\]
As $V$ and $\phi$ were arbitrary, it
follows that $u$ is a fine superminimizer in $U$.

Finally, assume that $U\setm E=X$ is bounded.
By assumption, $u$ is a solution of the $\K_{u,u}(\Om)$-obstacle problem
  for every open $\Om \Subset X$ with $\Cp(X \setm \Om)>0$.
  Hence, it follows from Proposition~9.25 in~\cite{BBbook} that
  $u$ is a superminimizer in $X$, and thus
a fine superminimizer in $U=X$, by Corollary~5.6 in~\cite{BBLat4}.
\end{proof}

We are now ready to prove Proposition~\ref{prop-local-global-intro},
as well as the following more general version of it.

\begin{cor}  \label{cor:localtoglobal-2}
Let $E$ be a set with $\Cp(E)=0$ and
assume that $u$ is a fine\/ \textup{(}super\/\textup{)}minimizer in $V$
for every finely open $V \pSubset U \setm E$
and that one of the following conditions holds\/\textup{:}
\begin{enumerate}
\item \label{m-a}  \label{m-first}
$u$ is a.e.-bounded.
\item \label{m-b}
$u\in\Npploc(U)$.
\item \label{m-c}
For every finely open $V' \psubset U$ there are
$f\in\Np(V')$ and $M\ge0$ {\rm(}depending on $V'$\/{\rm)}
such that $|u-f| \le M$ a.e.\ in $V'$.
\item \label{m-d}  \label{m-last}
For every finely open $W \pSubset U$
there is a quasiopen 
$W'$, $f\in\Np(W')$
and $M\ge0$ {\rm(}depending on $W'$\/{\rm)} such that $W \psubset W' \subset U$ and
$|u-f|\le M$ a.e.\ in~$W'$.
\end{enumerate}
Then $u$ is a fine\/ \textup{(}super\/\textup{)}minimizer in~$U$.
\end{cor}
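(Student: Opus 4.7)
The plan is to reduce each of the four conditions to the single assertion that $u \in \Npploc(U)$, from which Proposition~\ref{prop-superobstacle} immediately yields that $u$ is a fine superminimizer in $U$. For the minimizer version, the hypotheses are symmetric under $u \leftrightarrow -u$ (with the reference function $f$ replaced by $-f$ in (c) and (d)), so applying the same argument to $-u$ and combining via Lemma~5.4 of~\cite{BBLat4} handles the minimizer case at no additional cost.

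Condition (b) requires nothing further, while condition (a) is the special case of (c) obtained by taking $f \equiv 0$ on every $V' \psubset U$. Thus only (c) and (d) demand work, and in each the goal is to establish $u \in \Np(W)$ for every finely open $W \pSubset U$; this I would accomplish by a single application of Lemma~\ref{lem:bdd-super-seq-uj-W'} to the constant sequence $u_j \equiv u$.

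Under (d), fix a finely open $W \pSubset U$ and take the associated quasiopen $W' \subset U$, $f \in \Np(W')$ and $M \ge 0$, so that $W \psubset W'$ and $|u-f| \le M$ a.e.\ in $W'$. I would then apply Lemma~\ref{lem:bdd-super-seq-uj-W'} with ambient set $\fineint W'$, which coincides with $W'$ up to a capacity zero set by Theorem~\ref{thm-finelyopen-quasiopen}. The required input---that $u$ is a fine superminimizer on every finely open $V \pSubset \fineint W' \setminus E$---follows from the hypothesis of the corollary: any such $V$ admits a witnessing cutoff in $\Np_0(W' \setminus E)$ which, because $W' \subset U$, automatically lies in $\Np_0(U \setminus E)$, so that $V \pSubset U \setminus E$. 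For condition (c), the analogous argument begins by invoking Lemma~\ref{lem-seq-p-strict} to produce a finely open $V'$ with $W \psubset V' \psubset U$; an intermediate $W \pSubset V' \pSubset U$ is not known to exist (cf.\ Open problem~\ref{openprob-pSubset-intro}), but condition (c) only asks for $V' \psubset U$. Hypothesis (c) then supplies $f \in \Np(V')$ and $M \ge 0$ bounding $|u-f|$ a.e.\ on $V'$, and the same application of Lemma~\ref{lem:bdd-super-seq-uj-W'} with ambient set $\fineint V'$ delivers $u \in \Np(W)$.

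The main obstacle is the careful bookkeeping of how the $\pSubset$-relation interacts with the exceptional set $E$ across the nested approximations, precisely because no intermediate $\pSubset$-set is available. The saving point is that Lemma~\ref{lem:bdd-super-seq-uj-W'} only requires the fine superminimizer property on sets $V$ that are $\pSubset$ in its ambient set, and every such $V$ is automatically $\pSubset U \setminus E$ by the extension-by-zero argument indicated above.
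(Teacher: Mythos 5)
Your proposal is correct and follows essentially the same route as the paper: you reduce all four conditions to $u\in\Npploc(U)$ by applying Lemma~\ref{lem:bdd-super-seq-uj-W'} to the constant sequence $u_j\equiv u$ (using Lemma~\ref{lem-seq-p-strict} to produce the intermediate finely open set in case~(c), and noting that $\psubset$-ness in $W'\setminus E$ transfers to $\psubset$-ness in $U\setminus E$), and then conclude via Proposition~\ref{prop-superobstacle}. The only cosmetic differences from the paper's proof are that you treat~(a) as the $f\equiv0$ instance of~(c) rather than of~(d), you handle (c) and (d) in parallel rather than chaining (c)$\Rightarrow$(d)$\Rightarrow$(b), and you pass to $\fineint W'$ rather than working with the quasiopen $W'$ directly; none of these change the substance.
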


For $E=\emptyset$ and open $U$,
an analog of Corollary~\ref{cor:localtoglobal-2} is immediate,
  without the additional assumptions \ref{m-first}--\ref{m-last},
  see Proposition~9.21 in~\cite{BBbook}.
The fine case is more involved, 
and we do not know if it is true
without assuming one of \ref{m-first}--\ref{m-last},
cf.\ the discussion around  Open problem~\ref{openprob-pSubset-intro}.
Letting $U=B(0,1)$, $E=\{0\}$ and $u(x)=|x|^{(p-n)/(p-1)}$
in unweighted $\R^n$, $1<p<n$,
shows that Corollary~\ref{cor:localtoglobal-2}
would fail if \ref{m-first}--\ref{m-last} were omitted, even when $U$ is open.

\begin{proof}
The implication \ref{m-a}\imp\ref{m-d} is clear,
while \ref{m-c}\imp\ref{m-d} holds by Lemma~\ref{lem-seq-p-strict}.
When \ref{m-d} holds and $W \pSubset U$ is finely open,
it follows from
Lemma~\ref{lem:bdd-super-seq-uj-W'} (with $u_j=u$ and $U$ replaced by $W'$)
that $u \in \Np(W)$, and thus  $u \in \Npploc(U)$.
Hence  \ref{m-b} holds in all cases.
The conclusion now follows directly from
    Proposition~\ref{prop-superobstacle}.
\end{proof}

\begin{remark} \label{rmk-local-to-global-sheaf}
Combining 
Theorem~4.2\,(a)  in Kilpel\"ainen--Mal\'y~\cite{KiMa92} with 
Corollary~\ref{cor:localtoglobal-2}
shows that 
a function on a quasiopen set in
unweighted $\R^n$,  which  either  belongs to 
$\Npploc$  or is bounded,
is a fine supersolution if and only if it is a quasilocal fine supersolution
in the sense of \cite[Definition~4.1]{KiMa92}.
We do not know if the corresponding equivalence holds in metric spaces.
If it does then it would 
answer Open problem~9.23 in~\cite{BBbook} in the affirmative 
and at least partially yield
the sheaf property,
cf.\ the discussion 
after Theorem~\ref{thm-gen-seq-intro}.

The definition of quasilocal fine supersolutions
is also based on and yields the sheaf property.
Moreover, Example~5.2 in~\cite{KiMa92} demonstrates that, in contrast to fine supersolutions, 
unbounded quasilocal fine supersolutions on open sets
can fail to be supersolutions in the usual sense.
See 
the discussions after Theorem~\ref{thm-gen-seq-intro} above
and before Example~5.2 in~\cite{KiMa92},
and also Lemma~5.7 in Latvala~\cite{LatPhD}.

\end{remark}

\section{Sequences of fine superminimizers}
\label{sect-seq}

In this section we are going to deduce convergence results for
fine superminimizers.
Kilpel\"ainen--Mal\'y~\cite[Theorem~4.3]{KiMa92} deduced 
monotone convergence results for 
quasilocal fine supersolutions on unweighted $\R^n$.
When combined with Theorem~4.2\,(a) (with $V=U$) in~\cite{KiMa92}
their results
yield Proposition~\ref{prop-decr-lim-supermin-Np}
and Corollary~\ref{cor-incr-lim-supermin} in $\R^n$
under the additional assumptions that 
$U$ is bounded and $u \in \Np(U)$.
For the counterpart of Proposition~\ref{prop-decr-lim-supermin-Np} they also require that
$u$ is bounded from below.
Using our local-to-global Proposition~\ref{prop-superobstacle},
one can now 
remove the first additional assumption of \cite{KiMa92} and replace
the second one with $u \in \Npploc(U)$ or $u$ being bounded.

\begin{prop}        \label{prop-decr-lim-supermin-Np}
Let  $\{u_j\}_{j=1}^\infty$ be a decreasing sequence of
fine superminimizers in~$U$.
If $u:=\lim_{j\to\infty} u_{j}\in\Npploc(U)$, then $u$ is a fine superminimizer in $U$.
\end{prop}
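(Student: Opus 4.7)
The plan is to apply the obstacle-problem characterization of fine superminimizers in Proposition~\ref{prop-superobstacle} to reduce the assertion to verifying, on each finely open $V \pSubset U$ with $\Cp(X \setm V) > 0$, that $u$ solves the $\K_{u,u}(V)$-obstacle problem, and then to pin down that solution by the obstacle-problem comparison principle.

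Fix such a $V$. Since $u \in \Npploc(U)$, we have $u \in \Np(V)$, so $u \in \K_{u,u}(V)$ and the obstacle problem admits a solution $w$, which is a fine superminimizer in $V$ with $w \ge u$ q.e.\ in $V$ and $w - u \in \Np_0(V)$. On the other hand, each $u_j$ is a fine superminimizer in $U$, hence $u_j \in \Np(V)$, and Lemma~\ref{lem-super-obst} identifies $u_j$ itself as the q.e.\ unique solution of the $\K_{u_j,u_j}(V)$-obstacle problem (since $V \subset U$ is bounded quasiopen with $\Cp(X\setm V)>0$).

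The central step is to apply the comparison principle for obstacle problems (Corollary~4.3 in~\cite{BBnonopen}) to the solutions $w$ and $u_j$. The obstacle inequality $u \le u_j$ q.e.\ in $V$ follows from the monotonicity of the sequence together with Lemma~\ref{lem-ae-qe}, while the boundary-data difference $(u - u_j)_\limplus = 0$ trivially lies in $\Np_0(V)$. Hence $w \le u_j$ q.e.\ in $V$; letting $j \to \infty$ yields $w \le u$ q.e., and combined with $w \ge u$ this forces $w = u$ q.e.\ in $V$. Consequently $g_u = g_w$ a.e.\ in $V$, so $u$ itself solves the $\K_{u,u}(V)$-obstacle problem, and Proposition~\ref{prop-superobstacle} concludes that $u$ is a fine superminimizer in $U$.

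The main point I expect to require care is simply checking that the obstacle-problem framework applies in the quasiopen setting: $V$ is automatically bounded from $V \pSubset U$, the condition $\Cp(X\setm V)>0$ is provided by the reduction through Proposition~\ref{prop-superobstacle}, and the q.e.\ (rather than a.e.) comparison of the obstacles comes from Lemma~\ref{lem-ae-qe}. Notably, this strategy sidesteps the Caccioppoli inequality, weak compactness arguments, and any explicit strong $\Np$-convergence of $u_j$ to $u$, because all the analytic work has already been absorbed into the pre-existing obstacle-problem machinery.
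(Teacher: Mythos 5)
Your proof is correct and takes essentially the same route as the paper's own proof. Both arguments reduce via Proposition~\ref{prop-superobstacle} to the obstacle characterization on finely open $V \pSubset U$ with $\Cp(X\setm V)>0$, take a solution of the $\K_{u,u}(V)$-obstacle problem, invoke Lemma~\ref{lem-super-obst} to identify each $u_j$ as the solution of its own obstacle problem, apply the comparison principle from~\cite[Corollary~4.3]{BBnonopen} to squeeze the solution between $u$ and $\lim_j u_j$, and conclude with another appeal to Proposition~\ref{prop-superobstacle}.
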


\begin{proof}
Let $V \pSubset U$ be a finely open set such that $\Cp(X \setm V)>0$,
and $v$ be a solution of the $\K_{u,u}(V)$-obstacle problem.
As $u_j$ is a solution of the $\K_{u_j,u_j}(V)$-obstacle problem
(by Lemma~\ref{lem-super-obst}),
the comparison
principle \cite[Corollary~4.3]{BBnonopen}
implies that $v \le u_j$ q.e.\ in $V$.
Since this holds for all $j$, we have $v \le u$ q.e.\ in $V$.

On the other hand, by the definition of the obstacle problem,
$v \ge u$ q.e.\ in $V$, and thus $u=v$ q.e.\ in $V$.
Hence, $u$ is also a solution of the $\K_{u,u}(V)$-obstacle problem.
By Proposition~\ref{prop-superobstacle}, $u$ is a fine superminimizer in $U$.
\end{proof}

\begin{thm}  \label{thm-incr-lim-supermin-new}
Let  $\{u_j\}_{j=1}^\infty$ be an increasing sequence of
fine superminimizers in $U$ such that $|u_j -f|\le M$
a.e.\ in $U$ for some $f\in\Np(U)$ and some $M\ge0$.

Then $u:=\lim_{j\to\infty} u_j$
is a fine superminimizer in $U$.
\end{thm}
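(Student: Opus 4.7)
My plan is to first establish $u\in\Npploc(U)$, then apply Proposition~\ref{prop-char-truncation-new} to reduce to a family of bounded truncations, and finally handle each truncation via an obstacle problem argument reminiscent of the proof of Proposition~\ref{prop-decr-lim-supermin-Np}.

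Lemma~\ref{lem:bdd-super-seq-uj-W'} (with $E=\emptyset$) applies directly and yields $u\in\Np(W)$ for every bounded quasiopen $W\psubset U$; combined with Lemma~\ref{lem-seq-p-strict} and Remark~\ref{rmk-4.2} this gives $u\in\Npploc(U)$, while $|u-f|\le M$ a.e.\ is inherited in the limit. Since $u_1$ is a fine superminimizer, Proposition~\ref{prop-char-truncation-new} applied with $v:=u_1$ reduces the theorem to showing that for every integer $k\ge1$ the truncation $w^k:=\min\{u,u_1+k\}$ is a fine superminimizer in $U$. Note that $w^k=\lim_j w^k_j$ where $w^k_j:=\min\{u_j,u_1+k\}$ is a fine superminimizer by Lemma~\ref{lem-min-supermin}, and the sandwich $u_1\le w^k_j\le u_1+k$ combined with $|u_1-f|\le M$ gives the uniform bound $|w^k_j-f|\le M+k$; applying Lemma~\ref{lem:bdd-super-seq-uj-W'} again yields $w^k\in\Npploc(U)$.

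To show that $w^k$ is a fine superminimizer in $U$, by Proposition~\ref{prop-superobstacle} it suffices to verify that $w^k$ solves the $\K_{w^k,w^k}(V)$-obstacle problem for every finely open $V\pSubset U$ with $\Cp(X\setm V)>0$. Let $v^*$ denote this solution and set $\psi:=v^*-w^k\in\Np_0(V)$, $\psi\ge0$. Since $w^k$ is itself a competitor, $\int_V g_{v^*}^p\,d\mu\le\int_V g_{w^k}^p\,d\mu$; uniqueness of the obstacle problem will then force $w^k=v^*$ q.e.\ once the reverse inequality is established. To that end, $w^k_j+\psi\in\K_{w^k_j,w^k_j}(V)$ (since $\psi\ge0$ and $\psi\in\Np_0(V)$), so Lemma~\ref{lem-super-obst} gives
\[
\int_V g_{w^k_j}^p\,d\mu\le\int_V g_{w^k_j+\psi}^p\,d\mu,
\]
and weak lower semicontinuity on the LHS (via Lemma~\ref{lem:bdd-super-seq-uj-W'}) yields $\int_V g_{w^k}^p\,d\mu\le\liminf_j\int_V g_{w^k_j+\psi}^p\,d\mu$.

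The main obstacle is the matching upper bound $\liminf_j\int_V g_{w^k_j+\psi}^p\,d\mu\le\int_V g_{v^*}^p\,d\mu$, since monotone convergence $w^k_j+\psi\nearrow v^*$ a priori delivers only the opposite inequality by lower semicontinuity. Here the two-sided sandwich $u_1\le w^k_j\le u_1+k$ is decisive: it yields uniform Caccioppoli estimates via Theorem~\ref{thm-cacc-intro} (applied on each $V\pSubset U$ with reference function $u_1\in\Np(V)$ and amplitude $k$) and hence $L^p(V)$-boundedness of $\{g_{w^k_j+\psi}\}$ through $g_{w^k_j+\psi}\le g_{w^k_j}+g_\psi$. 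I then plan to combine this with the variational structure of each $w^k_j$ as the minimizer of its own obstacle problem, using weak compactness in $L^p(V)$ together with a Mazur-type convex-combination argument to promote weak convergence of the upper gradients to the required energy convergence. Once this reverse inequality is established, $w^k=v^*$ q.e.\ by uniqueness of the obstacle problem, so $w^k$ is a fine superminimizer in $U$ by Proposition~\ref{prop-superobstacle}, and Proposition~\ref{prop-char-truncation-new} completes the proof.
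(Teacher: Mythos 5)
Your opening step — obtaining $u\in\Npploc(U)$ from Lemma~\ref{lem:bdd-super-seq-uj-W'} — matches the paper's proof. But from there your argument diverges, and the place you yourself flag as ``the main obstacle'' is a genuine gap that your sketch does not, and in fact cannot, fill.

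First, a minor point: the reduction to truncations $w^k=\min\{u,u_1+k\}$ via Proposition~\ref{prop-char-truncation-new} is vacuous here. The hypothesis $|u_j-f|\le M$ a.e.\ already forces $u_1\le u_j\le u_1+2M$ a.e., so $w^k=u$ q.e.\ once $k\ge 2M$, and the truncation buys nothing (that machinery is needed in Theorem~\ref{thm-gen-seq}, not here).

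The serious problem is the step where you need
$\liminf_{j\to\infty}\int_V g_{w^k_j+\psi}^p\,d\mu\le\int_V g_{v^*}^p\,d\mu$
with $\psi=v^*-w^k$ fixed. Every tool you invoke points the other way. Boundedness of $\{g_{w^k_j+\psi}\}$ in $L^p(V)$ gives, after passing to a weakly convergent subsequence $g_{w^k_j+\psi}\rightharpoonup g$, that $g$ is a \p-weak upper gradient of $v^*$ (Fuglede/Mazur) and that $\|g\|_{L^p}\le\liminf_j\|g_{w^k_j+\psi}\|_{L^p}$. This yields $\int_V g_{v^*}^p\le\int_V g^p\le\liminf_j\int_V g_{w^k_j+\psi}^p$ — the reverse of what you need, exactly as you already observed for monotone convergence. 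Mazur's lemma upgrades weak convergence to strong convergence of convex combinations, but the resulting strong limit is $g$, not $g_{v^*}$, and there is no mechanism here to prevent excess energy from surviving the limit. The natural fix — replacing $\psi$ by the $j$-dependent function $v^*-w^k_j$ so that $w^k_j+\psi_j=v^*$ outright — fails because $v^*-w^k_j\notin\Np_0(V)$: the boundary data of the $\K_{w^k_j,w^k_j}(V)$-problems change with $j$. This boundary-data drift is precisely why the paper does not use the obstacle-problem comparison for the increasing case (it works beautifully for the decreasing case, Proposition~\ref{prop-decr-lim-supermin-Np}, via the comparison principle). Instead, the paper follows the Kinnunen--Martio scheme: introduce a cutoff $\eta$ with $\eta=1$ on an inner set $V$, test the superminimizing inequality for $u_j$ with the $j$-dependent function $\psi_j=\eta(u+\phi-u_j)\in\Np_0$, use the Leibniz estimate plus the elementary inequality $(\al+\be)^p\le\al^p+p\be(\al+\be)^{p-1}$ to subtract the common term $\int(1-\eta)g_{u_j}^p$, and then observe that the error term vanishes because $\be_j=\|(u-u_j)g_\eta\|_{L^p}\to 0$. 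That is the ingredient your sketch is missing, and without it the energy upper bound you need does not follow.
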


The proof has been inspired by the proof of
Theorem~6.1 in Kinnunen--Martio~\cite{KiMa02}.

\begin{proof}
Lemma~\ref{lem:bdd-super-seq-uj-W'}
implies that $u\in \Npploc(U)$ and that \eqref{eq-liminf-g-uj-W'}
holds for every finely open $W\pSubset U$.
To prove the superminimizing property,
let $V_0\pSubset U$ be  finely open
and let $\phi_0 \in \Np_0(V_0)$ be nonnegative.
Let $\eps>0$ be arbitrary.
By Proposition~\ref{prop-density} there is
a finely open
$V' \pSubset V_0$ and a bounded $0\le\phi\in\Np_0(V')$ so that
\begin{equation}   \label{eq-choose-V-eps-1}
\|\phi-\phi_0\|_{\Np(V_0)} <\eps.
\end{equation}
By Lemma~3.3 in~\cite{BBLat3},
$V_0$ has a base of fine neighbourhoods $W' \pSubset V_0$,
and thus, by the quasi-Lindel\"of principle
\cite[Theorem~3.4]{BBLat3}, we can find a countable collection
$W_j \pSubset V_0$ of finely open sets such that 
$\Cp(V_0 \setm \bigcup_{j=1}^\infty W_j)=0$.
Since $g_u\in L^p(V_0)$, dominated convergence implies that
there is $m$ such that
\begin{equation}   \label{eq-choose-V-eps-2}
\int_{V_0\setm V} g_u^p\,d\mu < \eps,
\quad \text{where } V=V' \cup \bigcup_{j=1}^m W_j.
\end{equation}
Moreover, $V \pSubset V_0$ is finely open.
By the definition of \p-strict subsets, there exists
$\eta_0\in\Np_0(V_0)$ such that $0\le\eta_0\le 1$ and $\eta_0=1$ in $V$.
Then
\[
W:=\bigl\{x\in V_0: \eta_0(x)>\tfrac12\bigr\}\pSubset U
\]
and $W$ is quasiopen, by Theorem~\ref{thm-finelyopen-quasiopen}.
Moreover, $\eta:=(2\eta_0-1)_\limplus \in\Np_0(W)$
and $\eta=1$ on $V$.

Let $v=u+\phi$. 
Then $0\le v-u_j\le \phi+2M$ a.e.
Since $\eta$ and $\phi$ are bounded, it thus follows
from the Leibniz rule \cite[Theorem~2.15]{BBbook} 
that $\psi_j:=\eta(v-u_j)\in\Np_0(W)$.
As in the proof of 
the Caccioppoli-type inequality (Theorem~\ref{thm-cacc-intro})
in Section~\ref{sect-Cacc},
we have
\[
g_{u_j+\psi_j} \le (1-\eta)g_{u_j} + \eta g_v + (v-u_j)g_\eta
	\quad \text{a.e. in } W,
\]
by Lemma~2.4 in Kinnunen--Martio~\cite{KiMa02}
(or \cite[Lemma~2.18]{BBbook}).
Since $W\pSubset U$ is quasiopen, we get from the
superminimizing property of $u_j$ that
\begin{align*}
\biggl(\int_{W} g_{u_j}^p \, d\mu \biggr)^{1/p}
   &\le \biggl(\int_{W} g_{u_j+\psi_j}^p \, d\mu \biggr)^{1/p} \\
   &\le \biggl(\int_{W} ((1-\eta) g_{u_j} + \eta g_v)^p \, d\mu \biggr)^{1/p}
      + \biggl(\int_{W} (v-u_j)^p g_\eta^p \, d\mu \biggr)^{1/p} \\
   &=: \alp_j + \be_j.
\end{align*}
Using the elementary inequality
\[
      (\alp + \be)^p \le \alp^p + p \be (\alp + \be)^{p-1},
	\quad \alp,\be \ge0,
\]
together with the convexity of $t \mapsto t^p$, we obtain that
\begin{align*}
\int_{W} g_{u_j}^p \, d\mu
    & \le \int_{W} ((1-\eta) g_{u_j} + \eta g_v)^p \, d\mu
        + p \be_j (\alp_j + \be_j)^{p-1} \\
    & \le \int_{W} (1-\eta) g_{u_j}^p \, d\mu
	+ \int_{W} \eta g_v^p \, d\mu + p \be_j (\alp_j + \be_j)^{p-1}.
\end{align*}
As $u_j \in \Np(W)$, we can subtract the first term on the right-hand
side from both sides of the inequality and obtain that
\begin{equation} \label{eq-ono}
\int_{V} g_{u_j}^p \, d\mu
     \le \int_{W} \eta g_{u_j}^p \, d\mu
         \le \int_{W} g_v^p \, d\mu + p\be_j (\alp_j + \be_j)^{p-1}.
\end{equation}
Since $2\eta_0>1$ on $W$, the
Caccioppoli-type inequality (Theorem~\ref{thm-cacc-intro})
shows that
\begin{align*}
\al_j &\le \biggl( \int_{W} g_{u_j}^p\,d\mu \biggr)^{1/p}
        + \biggl( \int_{W}g_v^p\,d\mu \biggr)^{1/p} \\
&\le 
   \biggl( \int_{V_0} (4^p\eta_0^p g_{f}^p
     + (8pM)^p g_{\eta_0}^p) \,d\mu \biggr)^{1/p}
            + \biggl( \int_{W} g_v^p\,d\mu \biggr)^{1/p},
\end{align*}
i.e.\ the sequence $\{\alp_j\}_{j=1}^\infty$ is bounded.
At the same time, since $g_\eta=0$ a.e.\ in~$V$ and $v=u$ outside~$V$,
we have by dominated convergence and the fact that
$|(u-u_j)g_\eta|\le 2M g_\eta \in L^p(W)$,
\[
\be_j = \biggl(\int_{W \setm V} (u-u_j)^p g_\eta^p \, d\mu \biggr)^{1/p}
	\to 0,
	\quad \text{as } j \to \infty.
\]
Altogether this shows that the last term in \eqref{eq-ono}
tends to $0$, as $j \to \infty$.
Using \eqref{eq-ono} together with \eqref{eq-liminf-g-uj-W'} applied to $V$,
we obtain that
\[
\biggl(\int_{V} g_{u}^p \, d\mu \biggr)^{1/p}
    \le \biggl(\int_{V_0} g_v^p \, d\mu \biggr)^{1/p}
   < \biggl(\int_{V_0} g_{u+\phi_0}^p \, d\mu \biggr)^{1/p} + \eps,
\]
where the last estimate follows from 
\eqref{eq-choose-V-eps-1} and the fact that
\[
g_v = g_{u+\phi} \le g_{u+\phi_0} + g_{\phi-\phi_0} \quad \text{a.e.\ in } V_0.
\]
Finally, \eqref{eq-choose-V-eps-2} yields
\[
\int_{V_0} g_{u}^p \, d\mu
    = \int_{V} g_{u}^p \, d\mu + \int_{V_0\setm V} g_{u}^p \, d\mu
   <      \biggl( \biggl(\int_{V_0} g_{u+\phi_0}^p \, d\mu \biggr)^{1/p}
        + \eps \biggr)^p + \eps.
\]
As $\eps>0$ was arbitrary this completes the proof.
\end{proof}

\begin{cor}   \label{cor-incr-lim-supermin}
Let  $\{u_j\}_{j=1}^\infty$ be an increasing sequence of
fine superminimizers in~$U$.
If $u:=\lim_{j\to\infty} u_j\in\Npploc(U)$ then it is a fine superminimizer in $U$.
\end{cor}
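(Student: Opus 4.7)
The plan is to reduce to the bounded setting of Theorem~\ref{thm-incr-lim-supermin-new} by localizing to regions where $u_1$ itself can serve as the reference function of bounded difference. Since $u\in\Npploc(U)$ is assumed, Corollary~\ref{cor:localtoglobal-2} (with $E=\emptyset$ and condition~\ref{m-b}) reduces the task to showing that $u$ is a fine superminimizer on every finely open $V\pSubset U$.

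Fix such a $V$. Then $u_1|_V\in\Np(V)$ because $u_1\in\Npploc(U)$, and $u_1$ is a fine superminimizer in $V$ as well, since every finely open $V'\pSubset V$ also satisfies $V'\pSubset U$ (extend a cut-off $\eta\in\Np_0(V)$ that equals $1$ on $V'$ by zero outside $V$ to produce one in $\Np_0(U)$). Next I would apply Proposition~\ref{prop-char-truncation} inside $V$, taking $u_1$ as the reference fine superminimizer. Since $u|_V\in\Np(V)\subset\Npploc(V)$, it suffices to prove that $w_k:=\min\{u,u_1+k\}$ is a fine superminimizer in $V$ for every integer $k\ge1$.

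For each fixed $k$, write $w_k$ as the increasing q.e.\ limit of the functions $\min\{u_j,u_1+k\}$, $j=1,2,\dots$\,. Each term is a fine superminimizer in $U$ (hence in $V$) by Lemma~\ref{lem-min-supermin}. Monotonicity of $\{u_j\}$ gives $u_1\le u_j$, so $u_1\le\min\{u_j,u_1+k\}\le u_1+k$, and therefore
\[
|\min\{u_j,u_1+k\}-u_1|\le k \quad \text{a.e.\ in }V.
\]
This is exactly the boundedness hypothesis of Theorem~\ref{thm-incr-lim-supermin-new}, applied inside $V$ with $f:=u_1\in\Np(V)$ and $M:=k$. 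The theorem then produces $w_k$ as a fine superminimizer in $V$, so Proposition~\ref{prop-char-truncation} yields that $u$ is a fine superminimizer in $V$, and Corollary~\ref{cor:localtoglobal-2} completes the proof.

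I do not anticipate a genuine analytic obstacle; the argument is organizational and rests on three ingredients already in hand: the local-to-global principle of Corollary~\ref{cor:localtoglobal-2}, the truncation characterization of Proposition~\ref{prop-char-truncation}, and the bounded convergence result Theorem~\ref{thm-incr-lim-supermin-new}. The only decisive conceptual step is recognizing that once one localizes to $V\pSubset U$, the function $u_1$ simultaneously supplies the $\Np(V)$-reference and the $L^\infty$-control $|w_k-u_1|\le k$ needed to trigger the bounded theorem, and the only technical point requiring care is the $\pSubset$-transitivity used to transfer the fine superminimizing property of $u_1$ from $U$ to $V$.
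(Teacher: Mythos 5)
Your proof is correct and follows essentially the same route as the paper's: localize via the local-to-global principle (you use Corollary~\ref{cor:localtoglobal-2}, the paper uses Proposition~\ref{prop-superobstacle} directly, but these are equivalent here), observe that $\min\{u_j,u_1+k\}$ are fine superminimizers with $|\min\{u_j,u_1+k\}-u_1|\le k$, apply Theorem~\ref{thm-incr-lim-supermin-new} on $V$ with $f=u_1$ and $M=k$, and finish with Proposition~\ref{prop-char-truncation}. You spell out the $\pSubset$-transitivity step that the paper leaves implicit, but otherwise the arguments coincide.
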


\begin{proof}
In view of Proposition~\ref{prop-superobstacle},
it suffices to show that $u$ is a fine superminimizer in
  every finely open $V\pSubset U$.

For each such $V$ and $M\ge0$, the functions $\min\{u_j,u_1+M\}$, $j=1,2,\dots$
(which are fine superminimizers by
Lemma~\ref{lem-min-supermin}),
satisfy the assumptions of Theorem~\ref{thm-incr-lim-supermin-new} with $U$ and $f$
replaced by $V$ and $u_1$.
Hence, $\min\{u,u_1+M\}$ is a fine superminimizer in $V$.
Proposition~\ref{prop-char-truncation} shows that $u$ is a fine
superminimizer in $V$.
Since $V$ was arbitrary, the claim follows.
\end{proof}

We are now ready to prove Theorem~\ref{thm-gen-seq-intro}
as well as the following more general version of it.

\begin{thm} \label{thm-gen-seq}
Let $\{u_j\}_{j=1}^\infty$ be a sequence of fine superminimizers
in~$U$ such that for every finely open $W \pSubset U$
there are $f_0\in\Np(W)$ and $m \ge 1$
{\rm(}depending on $W$\/{\rm)} such that $u_j \ge f_0$
a.e.\ in $W$, $j=m,m+1,\dots$\,.
Assume that
one of the
following conditions from
  Corollary~\ref{cor:localtoglobal-2}
holds for $u:=\liminf_{j\to\infty} u_j$\/\textup{:}
\begin{enumerate}
\item \label{gen-a}  \label{gen-first}
$u$ is a.e.-bounded.
\item \label{gen-b}
$u \in \Npploc(U)$.
\item \label{gen-c}
For every finely open $V' \psubset U$ there are
$f\in\Np(V')$ and $M\ge0$ {\rm(}depending on $V'$\/{\rm)}
such that $|u-f| \le M$ a.e.\ in $V'$.
\item \label{gen-d} \label{gen-last}
For every finely open $W \pSubset U$
  there is a quasiopen $W'$, $f\in\Np(W')$
  and $M\ge0$ {\rm(}depending on $W'$\/{\rm)} such that $W \psubset W' \subset U$ and
$|u-f|\le M$ a.e.\ in~$W'$.
\end{enumerate}
Then $u$ is a fine superminimizer in $U$.
\end{thm}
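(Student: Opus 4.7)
The plan is to apply Corollary~\ref{cor:localtoglobal-2} with $E=\emptyset$: since the hypotheses~\ref{gen-a}--\ref{gen-d} on $u$ here match those of Corollary~\ref{cor:localtoglobal-2}, it suffices to show that $u:=\liminf_{j\to\infty}u_j$ is a fine superminimizer on every finely open $V\pSubset U$. Fix such a $V$; by hypothesis there exist $m\ge1$ and $f_0\in\Np(V)$ with $u_j\ge f_0$ a.e.\ in $V$ for all $j\ge m$, and $u_m\in\Np(V)$ since $u_m\in\Npploc(U)$.

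On $V$ I would use Proposition~\ref{prop-char-truncation-new} (under conditions~\ref{gen-a},~\ref{gen-c}, or~\ref{gen-d}, possibly after passing to the quasiopen set on which $|u-f|\le M$ holds) or Proposition~\ref{prop-char-truncation} (under condition~\ref{gen-b}), in either case with $v=u_m$. This reduces the theorem to showing that, for every $k\ge1$, the function $\min\{u,u_m+k\}$ is a fine superminimizer on $V$.

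Since $\liminf$ commutes with truncation from above by a fixed function,
\[
\min\{u,u_m+k\}=\liminf_{j\to\infty}\min\{u_j,u_m+k\}.
\]
Each $\min\{u_j,u_m+k\}$ is a fine superminimizer on $U$ by Lemma~\ref{lem-min-supermin}, and for $j\ge m$ satisfies on $V$ the sandwich $f_0\le\min\{u_j,u_m+k\}\le u_m+k$. Writing the liminf as the monotone increasing limit of infima
\[
v_j^k:=\inf_{i\ge j}\min\{u_i,u_m+k\}\nearrow\min\{u,u_m+k\}\quad\text{as }j\to\infty,
\]
and each $v_j^k$ in turn as the decreasing limit (as $N\to\infty$) of the iterated minima $\min\{u_j,u_{j+1},\dots,u_{j+N},u_m+k\}$ (again fine superminimizers by iteration of Lemma~\ref{lem-min-supermin}), I would apply Proposition~\ref{prop-decr-lim-supermin-Np} to the inner decreasing limits and Corollary~\ref{cor-incr-lim-supermin} (or Theorem~\ref{thm-incr-lim-supermin-new}) to the outer increasing limit.

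The hard part is that the invocations of Lemma~\ref{lem:bdd-super-seq-uj-W'} underlying Proposition~\ref{prop-decr-lim-supermin-Np} and Theorem~\ref{thm-incr-lim-supermin-new} require a constant two-sided bound $|\cdot-f|\le M$, whereas our sandwich only controls the sequence in terms of the $\Np$-functions $f_0$ and $u_m+k$. I would overcome this by localizing to the quasiopen sublevel sets
\[
V_L:=\{x\in V:|u_m(x)|<L\}\cap\{x\in V:|f_0(x)|<L\},\quad L=1,2,\dots,
\]
on each of which the sandwich becomes the constant bound $|\min\{u_j,u_m+k\}|\le L+k$. Since $u_m$ and $f_0$ are finite q.e.\ in $V$, the set $V\setminus\bigcup_L V_L$ has capacity zero, and $V_L\pSubset U$ (using the testing function witnessing $V\psubset U$). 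The monotone convergence tools applied on each $V_L$ then yield that $\min\{u,u_m+k\}$ is a fine superminimizer on each $V_L$, and a final appeal to Corollary~\ref{cor:localtoglobal-2} transfers this to $V$, thereby completing the verification of the hypothesis of Proposition~\ref{prop-char-truncation-new} (or~\ref{prop-char-truncation}) and hence the theorem.
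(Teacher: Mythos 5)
Your overall plan---reduce via Corollary~\ref{cor:localtoglobal-2} to finely open $V\pSubset U$, pass via Proposition~\ref{prop-char-truncation-new} to truncations $\min\{u,v+k\}$, then decompose the $\liminf$ as an increasing limit of decreasing limits of finite minima and feed these into Proposition~\ref{prop-decr-lim-supermin-Np} and Theorem~\ref{thm-incr-lim-supermin-new}---is structurally the same as the paper's. The crucial divergence, which you correctly flag as ``the hard part'', is that Lemma~\ref{lem:bdd-super-seq-uj-W'} and Theorem~\ref{thm-incr-lim-supermin-new} require a two-sided bound $|\,\cdot\,-f|\le M$ with a \emph{single} $f\in\Np$ and a \emph{constant} $M$, whereas your sandwich $f_0\le\min\{u_j,u_m+k\}\le u_m+k$ has unbounded spread $u_m+k-f_0$.

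Your patch via the sublevel sets $V_L$ has a genuine gap at the final step. Once you have ``$\min\{u,u_m+k\}$ is a fine superminimizer in each $V_L$'', you cannot invoke Corollary~\ref{cor:localtoglobal-2} to pass to $V$: that corollary requires the superminimizing property in \emph{every} finely open $V'\pSubset V\setm E$, not merely in the specific exhaustion $\{V_L\}_{L}$. A finely open $V'\pSubset V\setm E$ need not be a \p-strict subset of any single $V_L$, since $u_m$ and $f_0$ may be unbounded on $V'$ (and $V_L$ are not open, so the compactness of $\overline{V'}$ does not help). Without the sheaf property---which, as the paper emphasizes, is not available here---one cannot promote ``superminimizer on an increasing union of quasiopen sets'' to ``superminimizer on the union''. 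Repairing this by rerunning the argument inside $V'$ just recreates the same problem.

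The paper's proof sidesteps the issue entirely by taking $v$ to be the solution of the $\K_{f_0,f_0}(W)$-obstacle problem on the fixed finely open $W\pSubset U$, rather than $v=u_m$. The comparison principle then gives $v\le u_j$ q.e.\ for $j\ge m$, whence
\[
v\le \min\{u_k,\dots,u_j,v+l\}\le v+l,
\]
i.e.\ $|\min\{u_k,\dots,u_j,v+l\}-v|\le l$ with $v\in\Np(W)$ and $l$ a constant. This is precisely the hypothesis shape that Lemma~\ref{lem:bdd-super-seq-uj-W'} and Theorem~\ref{thm-incr-lim-supermin-new} want, so no sublevel-set localization is needed. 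You should also note that the paper disposes separately of the degenerate case when $X$ is bounded and $U=X$ (where the obstacle problems need not be well-posed because $\Cp(X\setm V)$ can vanish); your proposal does not address this.
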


\begin{proof} 
Consider first the case when $X$ is bounded and $U=X$.
Since $X$ is open, each $u_j$
is a  standard superminimizer in $X$, by
Corollary~5.6 in~\cite{BBLat4}.
It then follows from \cite[Proposition~9.4 and Corollary~9.14]{BBbook}
that $u_j$ is q.e.-constant, and thus so is $u$,
since any of the assumptions \ref{gen-first}--\ref{gen-last}
excludes the theoretical possibility that $u=\infty$ q.e.
Hence $u$ is a fine superminimizer. So in the rest of the proof
we assume that $U \ne X$ or that $U=X$ is unbounded.

Consider $l\ge0$ and a finely open set $W\pSubset U$.
Let $f_0\in \Np(W)$ and $m$
be as in the assumptions.
Note that $X \setm \overline{W}$ is a nonempty open set and
thus $\Cp(X \setm W)>0$.
Let $v$ be a solution of the $\K_{f_0,f_0}(W)$-obstacle problem.
As each $u_j$ is a solution of the $\K_{u_j,u_j}(W)$-obstacle problem
(by Lemma~\ref{lem-super-obst}),
the comparison
principle \cite[Corollary~4.3]{BBnonopen}
implies that $v \le u_j$ q.e.\ in $W$ for all $j\ge m$.
By Theorem~6.2 in~\cite{BBLat4},
$v$ is a fine superminimizer in $W$.
For every fixed integer $k\ge m$, the functions
\[
v_{k,j} = \min\{u_k,\dots,u_j, v+l\}, \quad j\ge k,
\]
are fine superminimizers in $W$, by Lemma~\ref{lem-min-supermin}.
Let $V \pSubset W$ be finely open.
Applying Lemma~\ref{lem:bdd-super-seq-uj-W'} on $W$
with $M=l$ and $f=v\in\Np(W)$ (and $U$ resp.\ $W$ replaced  by $W$ resp.\ $V$)
to the decreasing sequence
$v_{k,j}$ shows that $v_k:=\lim_{j\to\infty} v_{k,j}\in\Np(V)$.
As $V$ was arbitrary, we see that $v_k \in\Npploc(W)$.

Proposition~\ref{prop-decr-lim-supermin-Np}, applied to $W$, then implies that $v_k$ is
a fine superminimizer in $W$.
Note that $v\le v_k\nearrow \min\{u, v+l\}$ q.e.\ in $W$ and that $v_k$
satisfy the assumptions of Theorem~\ref{thm-incr-lim-supermin-new}
with $U$, $f$ and $M$ replaced by $W$, $v$ and $l$.
It follows that $\min\{u,v+l\}$ is a fine superminimizer in $W$
for every $l\ge0$.
Since any of the assumptions \ref{gen-first}--\ref{gen-last}
(with $f \equiv 0$ in \ref{gen-first} and $f=u$ in \ref{gen-b})
implies that $|u-f| \le M$ a.e.\ in $W$,
Proposition~\ref{prop-char-truncation-new} (applied with $U$ replaced
by $W$) implies that $u$ is a fine superminimizer in $W$.

Since $W\pSubset U$ was arbitrary,
Corollary~\ref{cor:localtoglobal-2} concludes the proof.
\end{proof}

For sequences of fine minimizers, the following result is a direct
consequence of Theorem~\ref{thm-gen-seq}, applied to both
$\{u_j\}_{j=1}^\infty$ and $\{-u_j\}_{j=1}^\infty$.
Corollary~\ref{cor-monotone-min-intro} is 
a special case of this result.

\begin{cor} 
Let  $\{u_j\}_{j=1}^\infty$ be a sequence of
fine minimizers in $U$ such that for every finely open $W \pSubset U$
there are $f_0, f_1\in\Np(W)$ and $m \ge 1$
{\rm(}depending on $W$\/{\rm)} such that $f_0 \le u_j \le f_1$
a.e.\ in $W$, $j=m,m+1,\dots$\,.
Assume that $u_j\to u$ q.e.\ in $U$ and that
one of the
conditions~\ref{gen-a}--\ref{gen-d} in Theorem~\ref{thm-gen-seq} holds.
Then $u$ is a fine minimizer in $U$.
\end{cor}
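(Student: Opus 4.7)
The plan is to apply Theorem~\ref{thm-gen-seq} twice: once to the sequence $\{u_j\}_{j=1}^\infty$ and once to $\{-u_j\}_{j=1}^\infty$, and then combine the conclusions using the characterization that a fine minimizer is precisely a function which is both a fine super- and a fine subminimizer (Lemma~5.4 in~\cite{BBLat4}).

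First I would verify that both sequences satisfy the hypotheses of Theorem~\ref{thm-gen-seq}. Since each $u_j$ is a fine minimizer, both $u_j$ and $-u_j$ are fine superminimizers in $U$. The two-sided local bound $f_0\le u_j\le f_1$ a.e.\ in $W$ (for $j\ge m$) with $f_0,f_1\in\Np(W)$ immediately supplies the one-sided lower bounds required by Theorem~\ref{thm-gen-seq}: $u_j\ge f_0$ for $\{u_j\}$, and $-u_j\ge -f_1$ for $\{-u_j\}$, with $-f_1\in\Np(W)$.

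Next, using that $u_j\to u$ q.e.\ in $U$, I would note the key identity
\[
\liminf_{j\to\infty} u_j = u = -\limsup_{j\to\infty} u_j = -\liminf_{j\to\infty}(-u_j)
\quad\text{q.e.\ in } U.
\]
Consequently, the auxiliary assumptions \ref{gen-a}--\ref{gen-d} of Theorem~\ref{thm-gen-seq} are symmetric under the change $u\mapsto -u$: a.e.-boundedness, membership in $\Npploc(U)$, and the closeness condition $|u-f|\le M$ all pass to $-u$ (replacing $f$ by $-f$). Therefore, whichever of \ref{gen-a}--\ref{gen-d} holds for $u$ also holds for $-u$.

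Applying Theorem~\ref{thm-gen-seq} to $\{u_j\}_{j=1}^\infty$ then yields that $u=\liminf_j u_j$ is a fine superminimizer in $U$. Applying it to $\{-u_j\}_{j=1}^\infty$ gives that $-u=\liminf_j(-u_j)$ is also a fine superminimizer in $U$; equivalently, $u$ is a fine subminimizer in $U$. By Lemma~5.4 in~\cite{BBLat4}, $u$ is a fine minimizer in $U$. There is essentially no obstacle here beyond the bookkeeping needed to check that conditions~\ref{gen-a}--\ref{gen-d} are indeed invariant under negation and that the two-sided bound provides the one-sided bound required by Theorem~\ref{thm-gen-seq} for the negated sequence.
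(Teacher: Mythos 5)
Your proposal is correct and matches the paper's own argument: the paper likewise obtains the corollary by applying Theorem~\ref{thm-gen-seq} to both $\{u_j\}_{j=1}^\infty$ and $\{-u_j\}_{j=1}^\infty$ and combining the two conclusions via the super/subminimizer characterization of fine minimizers. The bookkeeping you carry out (two-sided bound yields the needed one-sided bounds, q.e.\ convergence gives $\liminf u_j = u = -\liminf(-u_j)$ q.e., and conditions~\ref{gen-a}--\ref{gen-d} are stable under negation) is exactly what makes the double application legitimate.
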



\begin{thebibliography}{99}

\bibitem{BBbook} \book{Bj\"orn, A. \AND Bj\"orn, J.}
        {\it Nonlinear Potential Theory on Metric Spaces}
    {EMS Tracts in Mathematics {\bf 17},
        European Math. Soc., Z\"urich, 2011}

\bibitem{BBvarcap} \art{\auth{Bj\"orn}{A} \AND \auth{Bj\"orn}{J}}	
        {The variational capacity with respect to nonopen sets in metric spaces}
	{Potential Anal.} {40} {2014} {57--80}

\bibitem{BBnonopen} \art{\auth{Bj\"orn}{A} \AND \auth{Bj\"orn}{J}}	
	{Obstacle and Dirichlet problems on arbitrary nonopen sets
          in metric spaces, and fine topology}
        {Rev. Mat. Iberoam.} {31} {2015} {161--214}

\bibitem{BBLat1} \art{\auth{Bj\"orn}{A}, \auth{Bj\"orn}{J}  \AND
    \auth{Latvala}{V}}
        {The weak Cartan property for the \p-fine topology on metric spaces}
	{Indiana Univ. Math. J.} {64} {2015} {915--941}

\bibitem{BBLat3} \art{\auth{Bj\"orn}{A}, \auth{Bj\"orn}{J}  \AND
    \auth{Latvala}{V}}
        {Sobolev spaces,  fine gradients and quasicontinuity on quasiopen sets}
	{Ann. Acad. Sci. Fenn. Math.} {41} {2016} {551--560}

\bibitem{BBLat2} \art{\auth{Bj\"orn}{A}, \auth{Bj\"orn}{J}  \AND
    \auth{Latvala}{V}}
        {The Cartan, Choquet and Kellogg properties of the
        fine topology on metric spaces}
	{J. Anal. Math.} {135} {2018} {59--83}

\bibitem{BBLat4} \arttoappear{\auth{Bj\"orn}{A}, \auth{Bj\"orn}{J}  \AND
    \auth{Latvala}{V}}
        {The Dirichlet problem for \p-minimizers on finely open sets in metric spaces}
        {Potential Anal.}

\bibitem{BBMaly} \art{\auth{Bj\"orn}{A}, \auth{Bj\"orn}{J}  \AND
    \auth{Mal\'y}{J}}
        {Quasiopen and \p-path open sets, and characterizations of quasicontinuity}
	{Potential Anal.} {46} {2017} {181--199}

\bibitem{buttazzo-dalMaso} \art{\auth{Buttazzo}{G} \AND \auth{Dal Maso}{G}}
     {An existence result for a class of shape optimization problems}
     {Arch. Ration. Mech. Anal.} {122} {1993} {183--195}

\bibitem{Fug} \book{\auth{Fuglede}{B}}
         {Finely Harmonic Functions}
         {Springer, Berlin--New York, 1972}

\bibitem{Fug74} \art{Fuglede, B.}
        {Fonctions harmoniques et fonctions finement harmoniques}
        {Ann. Inst. Fourier\/ \textup{(}Grenoble\/\textup{)}}
        {24{\rm:4}}{1974}{77--91}

\bibitem{FuscoMZ} \art{\auth{Fusco}{N}, \auth{Mukherjee}{S}
        \AND \auth{Zhang}{Y. R.-Y}}
      {A variational characterisation of the second eigenvalue of the
        \p-Laplacian on quasi open sets}
      {Proc. Lond. Math. Soc.} {119} {2019} {579--612}

\bibitem{HeKiMa} \book{\auth{Heinonen}{J},
	\auth{Kilpel\"ainen}{T}
	\AND \auth{Martio}{O}}
        {Nonlinear Potential Theory of Degenerate Elliptic Equations}
        {2nd ed., Dover, Mineola, NY, 2006}

\bibitem{HKST} \book{\auth{Heinonen}{J}, \auth{Koskela}{P},
	\auth{Shanmugalingam}{N} \AND \auth{Tyson}{J. T}}
       {Sobolev Spaces on Metric Measure Spaces}
	{New Mathematical Monographs {\bf 27}, Cambridge Univ. Press,
        Cambridge, 2015}

\bibitem{KiMa92} \art{Kilpel\"ainen, T. \AND Mal\'y, J.}
        {Supersolutions to degenerate elliptic equation on quasi open sets}
        {Comm. Partial Differential Equations}
        {17} {1992} {371--405}

\bibitem{KiMa02} \art{Kinnunen, J. \AND Martio, O.}
         {Nonlinear potential theory on metric spaces}
         {Illinois Math. J.} {46} {2002} {857--883}

\bibitem{LuMaZa} \book{\auth{Luke\v{s}}{J}, \auth{Mal\'y}{J} \AND
         \auth{Zaj\'i\v{c}ek}{L}}
         {Fine Topology Methods in Real Analysis and Potential Theory}
         {Springer, Berlin--Heidelberg, 1986}

\bibitem{LatPhD} \book{\auth{Latvala}{V}}
        {Finely Superharmonic Functions of Degenerate Elliptic Equations}
        {Ann. Acad. Sci. Fenn. Ser. A I Math. Dissertationes {\bf 96}
        {(1994)}}

\bibitem{Lat00} \art{Latvala, V.}
        {A theorem on fine connectedness}
        {Potential Anal.} {12} {2000} {221--232}

\bibitem{Sh-harm} \art{Shanmugalingam, N.}
         {Harmonic functions on metric spaces}
         {Illinois J. Math.}{45}{2001}{1021--1050}

\end{thebibliography}
\end{document}